\newenvironment{FG}{\noindent\color{blue} FG : }{}
\newtheorem{thm}{Theorem}[section]
\newtheorem{lem}[thm]{Lemma}
\newtheorem{cor}[thm]{Corollary}
\theoremstyle{remark}
\newtheorem{rem}[thm]{\bf Remark}
\newtheorem{exmp}[thm]{\bf Example}
\newtheorem{defn}[thm]{\bf Definition}
\newcommand{\la}{\mathcal}
\newcommand{\mbb}{\mathbb}
\newcommand{\fra}{\mathfrak}
\newcommand{\ovr}{\overrightarrow}
\newcommand{\CN}{\la{N}}
\newcommand{\N}{\mathbb{N}}
\newcommand{\Z}{\mbb{Z}}
\newcommand{\Q}{\mbb{Q}}
\newcommand{\R}{\mbb{R}}
\newcommand{\Set}[2]{\left\{\, #1 \;\middle|\; #2 \,\right\}}
\newcommand{\rb}[1]{{\left( #1 \right)}}
\newcommand{\ds}{\displaystyle}
\let\@fnsymbol\@arabic
\begin{document}
\author{Funda Gul, Alexei G. Myasnikov\thanks{The second author was partially supported by NSF grant
DMS-1318716 and by Russian Research Fund, project 14-11-00085.}, Mahmood Sohrabi}
\title{Distortion of embeddings of a torsion-free finitely generated nilpotent group into a unitriangular group}
\date{}
\maketitle
\begin{abstract} In this paper we study distortion of various well-known embeddings of finitely generated torsion-free nilpotent groups $G$ into  unitriangular groups $UT_n(\mathbb{Z})$. We also provide a polynomial time algorithm  for finding distortion of a given subgroup of $G$.

\noindent
\textbf{Keywords.} Nilpotent groups, subgroup distortion, Nickel's embedding, Jennings' embedding, unitriangular groups, Mal'cev basis, Heisenberg group. 
\end{abstract}

\section{Introduction} Studying groups from a geometric point of view has been a major research theme in the past few decades. Not only has the geometric point of view created new questions and insights in understanding groups, but also it has given us some further criteria for evaluating classical constructions. In this work we study embeddings of  finitely generated torsion free nilpotent groups into groups of unitriangular matrices over integers from geometric point of view.  There are various emebddings of this type, some of them are known for more than half a century already, and for pure algebraic  purposes they seem equally good. 
However, with developments in algorithmic and geometric group theory new principal questions arise. The first group of questions concerns with the algorithmic properties of embeddings, for example, one may need to know how much time is required  when given a finitely generated torsion free nilpotent  group $G$  to find its image in $UT_n(\mathbb{Z})$ under a given embedding. The second group of questions is about geometric properties of embeddings, in particular, one may want to know the degree of distortion of the image of $G$ in $UT_n(\mathbb{Z})$ under a given embedding. In this line of thought an embedding of groups $\phi: G \to K$ is ``good'' if $\phi(G)$ is an undistorted subgroup of $K$. Below  we address the latter questions for some standard embeddings of $G$ into integer unitriangular matrices. 

One of the earliest, most studied, and explicit embeddings of a finitely generated torsion-free nilpotent group $G$ into a unitriangular group is due to S. A. Jennings.   In~\cite{J}, S. A. Jennings offers a detailed study of the group ring of $G$  and the embedding follows as a natural corollary. P. Hall's Edmonton's notes~\cite{hall} contain  an exposition of the later. We show first that the standard Jennings embedding distorts the image of $G$, then we further explain and prove that the embedding can not be made without distortion for $\tau$-groups with rank of $\Gamma_c(G) \geq 2$, where $c$ is the nilpotency class of $G$. We also show that the image of  $(2n+1)$-dimensional Heisenberg group for $n\geq2$ under Jenning's embedding is always distorted. Finally, we show that  in case of unitraingular groups if a proper Mal'cev basis (non-standard) is selected, the embedding can be made without distortion.

In the book  ~\cite{km} Kargapolov and Merzlyakov provide another embedding of $G$ into unitriangular groups, based on the method of coordinates.  However, the construction itself is rather theoretical, it is obscured by the inductive arguments, and is hard to build the image of $G$ in an "algorithmic" manner. Recently in \cite{N} W. Nickel gave another embedding for $G$ into unitriangular groups. His embedding is algorithmic friendly, in fact the algorithm is implemented and can be found in \cite{N}. In this paper, similar to Jennings' we show that when the non-standard Mal'cev basis for the unitriangular group is selected, Nickel's embedding can be made undistorted. We also show that for $n\geq2$ the image of any $(2n+1)$-dimensional Heisenberg group under Nickel's embedding is always distorted in $UT_d(\Z)$ for $d=2n+2$.

The notion of distortion of a subgroup of a given group has been widely studied in recent years. In the context of nilpotent groups the main results are due to D. V. Osin~\cite{osin}. He gives a thorough study of distortion of subgroups of a finitely generated nilpotent group, through the study of the notion in connected simply-connected Lie groups. We shall use one of his main results here. Notice that T. Davis~\cite{D} also provides a criterion for a subgroup of a free nilpotent group being undistorted.

Finally, we provide a polynomial time algorithm for finding distortion of a given subgroup of a finitely generated nilpotent group $G$.

\subsection{Distortion of subgroups in unitriangular groups}\label{distortion}

Here we provide the required definitions as well as D. Osin's result on distortion of subgroups of finitely generated nilpotent groups. If $R$ is an associative ring with unity 1, then by $UT_n(R)$ we mean the group of upper triangular $n\times n$ matrices over $R$ with all diagonal entries equal to 1. We also call such a group a \emph{unitriangular group}.  
\begin{defn}\label{OS:dist:def} Let $H\leq G$ and $dist_G: G\times G\to \R_{+} \cup \{0\}$ be the word metric on $G$ and the word metric on $H$ be denoted by $dist_H$. The ball of radius $n$ in $G$ is defined as:
$$B(n) = \{ g\in G | \: dist_G(1,g) \leq n\} $$
and the distortion function (of $H$ as a subgroup of $G$) is defined by:
$$ \Delta_H ^G (n)= max\{dist_H(1,h):h\in H \cap B(n) \} $$
\end{defn}
Given a nilpotent group $G$ we set $\Gamma_1(G)=_{\text{def}}G$ and $\Gamma_{i+1}(G)=_{\text{def}}[\Gamma_i(G),G]$ for $i\geq 2$ where for subgroups $A$ and $B$ of $G$, $[A,B]$ denotes the subgroup generated by the commutators $[x,y]=x^{-1}y^{-1}xy$, $x\in A$ and $y\in B$. The subgroup $\Gamma_i=\Gamma_i{(G)}$ is called the \emph{i'th term of the lower central series of $G$}. The \emph{nilpotency class} of $G$ is the least number $c$ for which $\Gamma_{c+1}=\{1\}$.
 
\begin{defn}\label{Os:weight-tor:defn} Let $G$ be a finitely generated nilpotent group and $G^0$ be the set of all elements of infinite order. For any $g\in G^0$, the weight $\nu_{G}(g)$ is the maximal $k$ such that $\langle g \rangle \cap \Gamma_k(G) \neq \{1\}.$
\end{defn}

As a corollary of Theorem~2.2 in \cite{osin}, one can obtain the following theorem.  

\begin{thm}[D. Osin \cite{osin}]\label{utnZR} Let $G$ be finitely generated torsion free nilpotent group then the distortion of $H\leq G$ is equivalent to
 $$ \Delta_H^G (n) \sim n^{r}$$ where $$ r=\max_{{h\in H\setminus\{1\}}} \frac{\nu_G(h)}{\nu_H(h)}$$
\end{thm}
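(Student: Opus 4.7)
The plan is to deduce the formula directly from D.~Osin's Theorem~2.2 in \cite{osin}, which already characterizes subgroup distortion in a finitely generated nilpotent group through the weight filtration induced by the lower central series. The substantive work left to do is (i) to translate Osin's intrinsic polynomial-growth exponents into the weight function $\nu_G$ defined above, and (ii) to show that the supremum in the resulting formula is simultaneously realized as a lower bound by a single element and as a matching upper bound by bookkeeping with Mal'cev coordinates.

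The analytic tool I will use throughout is the well-known asymptotic
\[
|h^n|_G \sim n^{1/\nu_G(h)}
\]
valid for any $h \in G$ of infinite order, which follows from writing $h^n$ in a Mal'cev basis adapted to $\{\Gamma_k(G)\}$ and tracking the level at which $h$ first becomes nontrivial in the associated graded group; the identical estimate holds inside $H$ with $\nu_H$ replacing $\nu_G$. Granted this, the lower bound on $\Delta_H^G$ is essentially a one-line computation. Pick $h \in H\setminus\{1\}$ realizing the maximum $r = \nu_G(h)/\nu_H(h)$, consider the family $\{h^n\}_{n\geq 1}$, and reparametrize by $m \sim |h^n|_G \sim n^{1/\nu_G(h)}$. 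Then $h^n \in H \cap B(m)$ and
\[
|h^n|_H \sim n^{1/\nu_H(h)} \sim m^{\nu_G(h)/\nu_H(h)} = m^r,
\]
which forces $\Delta_H^G(m) \gtrsim m^r$.

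The main obstacle is the matching upper bound $\Delta_H^G(m) \lesssim m^r$. I would fix a Mal'cev basis $a_1,\ldots,a_s$ of $H$ and expand any $h \in H \cap B(m)$ in its coordinates as $h = a_1^{n_1}\cdots a_s^{n_s}$. A standard coordinate estimate in nilpotent groups, again extractable from Osin's Theorem~2.2, gives $|n_i| \lesssim m^{\nu_G(a_i)}$, since the $i$-th Mal'cev coordinate grows polynomially in the $G$-length with exponent equal to its $G$-weight. Combining with $|a_i^{n_i}|_H \lesssim |n_i|^{1/\nu_H(a_i)}$ and subadditivity of word length, this yields
\[
|h|_H \;\lesssim\; \sum_{i=1}^{s} m^{\nu_G(a_i)/\nu_H(a_i)} \;\lesssim\; m^{\max_i \nu_G(a_i)/\nu_H(a_i)} \;\leq\; m^r,
\]
where the last inequality uses that each basis element $a_i$ lies in $H\setminus\{1\}$ and therefore contributes a candidate to the maximum defining $r$. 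The two bounds together give $\Delta_H^G(n) \sim n^r$.
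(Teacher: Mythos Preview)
The paper does not actually supply a proof of this statement: it is quoted verbatim as Osin's result, introduced only by the sentence ``As a corollary of Theorem~2.2 in \cite{osin}, one can obtain the following theorem.'' So there is no argument in the paper to compare yours against; you are attempting the derivation that the authors omit.

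Your lower bound is fine. The upper bound, however, has a real gap. You fix an arbitrary Mal'cev basis $a_1,\ldots,a_s$ of $H$ (adapted to the lower central series of $H$) and assert that if $h=a_1^{n_1}\cdots a_s^{n_s}$ has $|h|_G\le m$ then $|n_i|\lesssim m^{\nu_G(a_i)}$. Your justification, that ``the $i$-th Mal'cev coordinate grows polynomially in the $G$-length with exponent equal to its $G$-weight'', is a statement about $G$-coordinates, not $H$-coordinates; it does not transfer. Concretely, take $G=UT_3(\Z)=\langle x,y,z\mid z=[x,y]\rangle$ and $H=\langle y,z\rangle\cong\Z^2$, with $H$-basis $a_1=y$, $a_2=yz$. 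Then $\nu_G(a_1)=\nu_G(a_2)=1$ and $\nu_H(a_1)=\nu_H(a_2)=1$. For $h=z^{m^2}$ one has $|h|_G\asymp m$, yet $h=a_1^{-m^2}a_2^{m^2}$, so $|n_1|=m^2\gg m^{\nu_G(a_1)}=m$. Your displayed chain then gives $|h|_H\lesssim m^{1}+m^{1}$, whereas in fact $|h|_H\asymp m^2$; and indeed $\max_i \nu_G(a_i)/\nu_H(a_i)=1$ while the true $r$ equals $2$, realised by $z$ itself.

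The missing idea is that the Mal'cev basis of $H$ must be chosen compatibly with the filtration $\{H\cap\tau_k(G)\}$, not merely with $\{\tau_k(H)\}$; only then do the $H$-coordinates inherit the growth bound from the $G$-length. Equivalently, one has to work with a basis of $H$ that refines both central series simultaneously, and track for each basis element both its $G$-weight and its $H$-weight. Without this compatibility, the bookkeeping you propose collapses, as the example shows.
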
 

\subsection{Main results and the structure of the paper}
 
In Section~\ref{UTN}, we show that every rational number of the form $\frac{p}{q}$ for $p\geq q \geq 2$ can be realized as the distortion of some subgroup of $UT_N(\Z)$, for some $N \geq 3$. In Section~\ref{embedding}, we fix our notation and discuss both ''Jennings embedding'' and ''Nickel's Embedding'' of a finitely generated torsion-free nilpotent groups into $UT_d(\Z)$ for some $d$. Section~\ref{main} includes the theorems regarding the distortions of the embeddings. Finally, in Section~\ref{subgroup}, we provide a polynomial time algorithm for finding distortion of a given subgroup of finitely generated nilpotent group $G$.

\section{Rationally distorted subgroups of $UT_N(\Z)$}\label{UTN}
In our first section we like to show that every rational number of the form $\frac{p}{q}$ for $p\geq q \geq 2$ can be realized as the distortion of some subgroup of $UT_N(\Z)$ for some $N \geq 3$.

We will denote each generator of $UT_N(\Z)$ in terms of $s_{ij}$ which represents $N\times N$ upper unitriangular matrix with $ij$'th entry $1$. More specifically, if $e_{ij}(\alpha)$, $i<j$, is the matrix with $ij'th$ entry $\alpha$ and the rest of the entries $0$, one can see that $s_{ij}(\alpha)=1+e_{ij}(\alpha)$ and $s_{ij}=s_{ij}(1)$ as well as
$$s_{ij}^{-1}=\left(s_{ij}(1)\right)^{-1}= s_{ij}(-1),$$ $$[s_{ij}, s_{jk}]=s_{ik}, \: \text{and}\: [s_{ji}, s_{kj}]=s_{ik}(-1) \: \text{for} \: i<j<k.$$ 

\begin{thm}
For any rational number $\frac{p}{q}$, where $p\geq q \geq 2$, there is a subgroup $H$ of a unitriangular group $G=UT_N(\Z)$ for some $N \geq 3$ such that 
$$ \Delta_H^G (n) \sim n^{p/q}.$$
\end{thm}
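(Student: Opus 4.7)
The plan is to apply Osin's Theorem above, which reduces the problem to constructing a subgroup $H \le G=UT_N(\Z)$ (for some $N$) in which the maximum of $\nu_G(h)/\nu_H(h)$ over $h \in H \setminus \{1\}$ equals exactly $p/q$. Equivalently, I need an element $h$ with $\nu_G(h)=p$ and $\nu_H(h)=q$, together with the guarantee that every other element of $H$ has ratio at most $p/q$; this forces $H$ to have nilpotency class exactly $q$.

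When $q$ divides $p$, say $p=kq$, the plain chain subgroup $H=\langle s_{1,k+1},\,s_{k+1,2k+1},\dots,s_{(q-1)k+1,\,qk+1}\rangle \le UT_{p+1}(\Z)$ suffices: its $j$-fold iterated commutator is $s_{1,\,jk+1}$, of weight $jk$ in $G$ and weight $j$ in $H$, so every ratio equals $k=p/q$. When $p/q$ is not an integer the plain chain is insufficient, because some step-size would have to exceed $\lfloor p/q\rfloor$ and push a generator's ratio past $p/q$. Instead, I would replace each generator by a product $a_i = s_{u_i,u_i+1}\cdot s_{v_{i-1},v_i}$, where the first weight-$1$ factor keeps $\nu_G(a_i)=1$ and commutes with every commutator built so far, while the "booster" second factor $s_{v_{i-1},v_i}$ supplies the extra weight $s_i = v_i - v_{i-1}$ that shows up in the iterated commutator; for $i=1$ only the booster is present. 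Taking $s_i = \lfloor ip/q\rfloor - \lfloor (i-1)p/q\rfloor$ yields $\sum s_i = p$ with every running average $(s_1+\cdots+s_j)/j \le p/q$, so the $j$-fold iterated commutator lands at $s_{1,\,1+s_1+\cdots+s_j}$ with ratio $\le p/q$, and the $q$-fold one attains $p/q$ exactly. The example $H=\langle s_{12},\,s_{24}s_{34},\,s_{46}s_{56}\rangle \le UT_6(\Z)$ illustrates the recipe and realizes distortion $5/3$.

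The main obstacle is controlling the side commutators that do not run along the designed chain. The booster $s_{v_{i-1},v_i}$ shares the index $v_{i-1}$ with the booster of $a_{i-1}$, so the pair commutator $[a_{i-1},a_i]$ could pick up a term of weight as large as $s_{i-1}+s_i$ in $G$; similarly, each commuting factor $s_{u_i,u_i+1}$ must be chosen to commute with \emph{every} earlier chain factor and every earlier booster, not merely with the main iterated commutator. The technical heart of the argument is therefore a placement lemma: the indices $u_i, v_i$ must be spaced so that, for every $j\le q$, every $j$-fold commutator among the $a_i$'s has $\nu_G \le \lfloor jp/q\rfloor$. This forces $N$ to grow with $p$ and $q$ and is where essentially all of the computation sits; once the placement is in hand, each $\nu_G(h)$ and $\nu_H(h)$ is read off via the identities $[s_{ij},s_{jk}]=s_{ik}$ and $[s_{ji},s_{kj}]=s_{ik}^{-1}$, and Osin's Theorem yields $\Delta_H^G(n)\sim n^{p/q}$.
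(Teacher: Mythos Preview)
Your reduction to Osin's theorem and the idea of manufacturing weight-$1$ generators by multiplying a booster $s_{v_{i-1},v_i}$ with a harmless transvection are on target, but the construction you outline is more elaborate than the paper's and the step you yourself call ``the technical heart'' is not actually carried out.  The paper does not spread the excess weight across $q$ boosters with step sizes $\lfloor ip/q\rfloor-\lfloor(i-1)p/q\rfloor$; it takes $N=p+1$, $m=q+1$, and concentrates the entire jump in a \emph{single} final generator:
\[
H=\bigl\langle\, s_{12},\,s_{23},\,\dots,\,s_{(m-2)(m-1)},\ s_{(N-1)N}\,s_{(m-1)N},\ s_{1N}\,\bigr\rangle\le UT_{p+1}(\Z).
\]
The first $q-1$ generators are consecutive unit transvections; the last non-redundant one is $s_{(N-1)N}s_{(m-1)N}$, where the factor $s_{(N-1)N}$ forces $G$-weight $1$ and the lone booster $s_{(m-1)N}$ supplies the long jump.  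One then has
\[
[\,s_{12},s_{23},\dots,s_{(m-2)(m-1)},\,s_{(N-1)N}s_{(m-1)N}\,]=s_{1N},
\]
so $\nu_G(s_{1N})=N-1=p$ and $\nu_H(s_{1N})=m-1=q$, and the paper reads off $r=p/q$ directly from the generators and $s_{1N}$ without any placement lemma.

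Your proposal, by contrast, explicitly postpones the decisive step: you assert that indices $u_i,v_i$ can be chosen so that every $j$-fold commutator among the $a_i$ has $\nu_G\le\lfloor jp/q\rfloor$, but you neither specify a general placement nor verify it.  Already in your own $5/3$ example the off-chain commutator $[a_2,a_3]=s_{26}s_{36}$ appears, and then $[[a_2,a_3],a_1]=s_{16}^{-1}$, so side terms genuinely feed back into the main chain; controlling the ratio $\nu_G/\nu_H$ for \emph{all} such $j$-fold commutators (not just the left-normed one $[a_1,\dots,a_j]$) is exactly the missing argument.  Until that lemma is written down, what you have is a plan rather than a proof, and the paper's single-booster construction reaches the same conclusion with a much shorter commutator bookkeeping.
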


\begin{proof}

For a given rational number $\frac{p}{q}$ for $p \geq q$, let $G= UT_N(\Z)$ such that $N=p+1$ and 
$$ H= \langle s_{12}, s_{23}, \dots,s_{(m-2)(m-1)}, s_{(N-1)N}s_{(m-1)N}, s_{1N}  \rangle \leq UT_N(\Z)$$
where $m=q+1$ and $3 \leq m < N$. 

Claim: 
$$ \Delta_H^G (n) \sim n^{p/q}. $$
To prove our claim, we will use Theorem~\ref{utnZR} and show that 
 $$ r= \max_{h\in H} \frac{\nu_G(h)}{\nu_H(h)}= \frac{p}{q}.$$
 
Note that, for all $2\leq k \leq m-1$,
$$\nu_G(s_{(k-1)k})= \nu_H(s_{(k-1)k})=1$$
and
$$ \nu_G(s_{(N-1)N}s_{(m-1)N})= \nu_H(s_{(N-1)N}s_{(m-1)N})=1.$$
Now we have
$$ [s_{12},s_{23}, \dots, s_{(m-2)(m-1)}, s_{(N-1)N}s_{(m-1)N}]= s_{1N}$$ 
in $H$, which shows that
$$\nu_H({s_{1N}})=m-1.$$ 
As a result we get
$$ r= \max_{h\in H-{1}} \frac{\nu_G(h)}{\nu_H(h)}= \frac{N-1}{m-1}=\frac{p}{q}$$
since $\nu_G(s_{1N})= N-1$. 
\end{proof}

\section{Embeddings of $\tau$-groups into $UT_n(\Z)$}\label{embedding}
\subsection{Jennings' Embedding}
By a $\tau$-group we mean a finitely generated torsion-free non-abelian nilpotent group. Below we will describe embeddings of $\tau$-groups into unitriangular groups over the ring of integers $\Z$. We first use the embedding provided by Jennings in his analysis of dimension subgroups of an arbitrary group $G$. Our presentation here is taken from P. Hall's Edmonton Notes~\cite{hall}. We refer to this reference for further details.  One of the main goals of this section is to set up the notation that we use in the sequel.

Let $G$ be a group and $\fra{R}$ any field of characteristic $0$.
Consider the group algebra
$$
\fra{R}G= \Set{ \sum_{x\in G} \lambda_{x} x}{\lambda_{x} \in \fra{R} \: and \: \lambda_{x}=0\: \text{for all but finitely many x} }.
$$
The set
$$I=\Set{\sum_{x\in G}\lambda_{x} x \in \fra{R}G}{\sum_{x\in G} \lambda_{x}=0 }$$
is an ideal in $\fra{R}G$ called the \emph{augmentation ideal}.
We usually identify $G$ with its image in $\fra{R}G$ under the embedding $g \to 1\cdot g$.

The subgroup
$$\Delta_{n}(G)= G\cap (1+I^n)$$
where $I^n=\{ \sum \alpha_{i}u_{1} \dots u_{n} : u_{i} \in I ~\text{and}~ \alpha_{i} \in \fra{K}\}$,
is called the \emph{n'th dimension subgroup of $G$}.
Notice that $G$ acts on $\fra{R}G$ by right multiplication, which induces an action of $G$ on $\fra{R}G/I^n$.
So, one has  a homomorphism
$$
\phi_n : G \to Aut\left( \fra{R}G/I^n \right),$$
where $Aut\rb{\fra{R}G/I^n}$ is  the group of nonsingular $\fra{R}$-linear transformations of the $\fra{R}$-vector space $\fra{R}G/I^n$ (which is finite dimensional over $\fra{R}$ as we shall see below). It is not hard to see that 
$$\Delta_{n}(G)=\ker(\phi_n).$$

Given a group $G$ and a subgroup $N$ of $G$ the \emph{isolator of $N$ in $G$} is the set:
$$
Is(N) = \Set{g\in G}{g^m\in N \mbox{ for some } 0\neq m \in\N}.
$$
When $G$ is nilpotent $Is(N)$ is a subgroup and normal in $G$ if $N$ is a normal subgroup of $G$.

Given a nilpotent group $G$ of class $c$, $\tau_i(G)$, denotes the \emph{$i$'th term of the isolated lower central series of $G$}, i.e.
$$\tau_i(G)=_{\text{def}}Is(\Gamma_i(G)).$$

\begin{defn} A basis $U = \{u_i\}$ for the $\fra{R}$-vector space $\fra{R}G$ is called an \emph{integral basis} if

\begin{enumerate}
\item[(i)] Each $u_i$ can be written as $u_i= \sum_{x\in G} n_{x}x$, such that $n_x \in \Z$ for all $x$.
\item[(ii)] Every $x\in G$ can be written as $x= \sum_{i} m_iu_i$, such that  $m_i \in \Z$ for all $i$.
\end{enumerate}
\end{defn}
Note: If $U$ is  an integral basis of $\fra{R}G$ then for every $u_i,u_j \in U$ one has $u_iu_j=\sum c_{ijk}u_k$ for some $c_{ijk} \in \Z$ (called the multiplicative constants of $U$).

From now on, we will assume that $G$ is a $\tau$-group and $n=c+1$, where $c$ is the nilpotency class of $G$.
Consider the isolated lower central series
$$ G= \tau_1(G) > \tau_2(G) > \dots > \tau_{n-1}(G) > \tau_n(G)=1 $$ of $G$. Refine it to a poly-infinite-cyclic  series of $G$:
$$ G= G_0>G_1>G_2> \dots > G_M=1$$ 
where $G_{i-1}/G_i \simeq \Z$ and chose $x_i \in G_{i-1}$  such that $G_{i-1}= \langle x_i, G_i \rangle$. Then the tuple $x_1, \ldots, x_M$ forms a  Mal'cev basis of $G$ associated with the poly-infinite-cyclic series above. 

\begin {lem}\label{basis:kG:lem} Let $G$ be a $\tau$-group and $\fra{R}$ a field of characteristic zero.  Consider a Mal'cev basis $(x_1, x_2, \ldots, x_M)$ for $G$ obtained as above and set $u_i= 1-x_i$. Then the set $V$ of all products $v= v_1v_2 \dots v_M$ in which each $v_i$ has one of the following forms,
\begin {itemize}
\item $u_{i}^{r_i}, \quad r_i\geq 0$
\item $u_{i}^nx_{i}^{-s_i}, \quad s_i >0$.
\end{itemize}
forms an integral basis for $\fra{R}G$.
\end{lem}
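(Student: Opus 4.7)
The plan is induction on the length $M$ of the Mal'cev basis, with base case $M=1$ (when $G=\langle x_1\rangle\cong\Z$ and $\fra{R}G\cong\fra{R}[x_1,x_1^{-1}]$ is a Laurent polynomial ring) and inductive step leveraging the set-theoretic decomposition $G=\bigsqcup_{a\in\Z} x_1^a G_1$, which is inherited from the Mal'cev basis structure ($\langle x_1\rangle\cap G_1=\{1\}$ and $\langle x_1\rangle G_1=G$).

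For the base case, I would dispatch non-negative powers directly via $x_1^r=(1-u_1)^r$, expanded by the binomial theorem into an integer polynomial in $u_1$. For negative powers, I would use the identity
$$u_1^n x_1^{-s} \;=\; (1-x_1)^n x_1^{-s} \;=\; \sum_{k=0}^n \binom{n}{k}(-1)^k\, x_1^{k-s},$$
solve for the $k=0$ term to obtain
$$x_1^{-s} \;=\; u_1^n x_1^{-s} \;+\; \sum_{k=1}^n \binom{n}{k}(-1)^{k+1}\, x_1^{k-s},$$
and apply strong induction on $s$: the terms with $k\geq s$ are non-negative powers of $x_1$ (handled by the positive case), while those with $k<s$ have strictly smaller absolute exponent and fall under the induction hypothesis. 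Linear independence of $V_1=\{u_1^{r_1}:r_1\geq 0\}\cup\{u_1^n x_1^{-s_1}:s_1>0\}$ is a triangularity observation: the $u_1^n x_1^{-s_1}$ are the unique source of negative powers of $x_1$, each contributing $x_1^{-s_1}$ with coefficient $+1$ as its lowest Laurent monomial, and the $u_1^{r_1}$ are polynomials in $x_1$ of pairwise distinct degrees.

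For the inductive step, assuming the lemma for $G_1$ with Mal'cev basis $(x_2,\ldots,x_M)$ yields an integral basis $V'$ of $\fra{R}G_1$ whose elements are products $v_2 v_3\cdots v_M$ of the stated form. The decomposition $G=\bigsqcup_{a\in\Z} x_1^a G_1$ induces the $\fra{R}$-vector space isomorphism
$$\fra{R}G \;\cong\; \fra{R}\langle x_1\rangle \otimes_\fra{R} \fra{R}G_1,\qquad p\otimes q\longmapsto pq,$$
which restricts at the $\Z$-level to $\Z G\cong \Z\langle x_1\rangle\otimes_\Z\Z G_1$. Consequently, tensoring the integral basis $V_1$ of $\fra{R}\langle x_1\rangle$ (from the base case) with $V'$ yields an integral basis of $\fra{R}G$; the resulting basis elements are precisely the products $v_1\cdot(v_2\cdots v_M)=v_1 v_2\cdots v_M$ in the statement of the lemma.

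The main obstacle is the base case, specifically the recursion for negative powers of $x_1$. Strong induction on $s$ takes care of it, but one must track carefully that at each step both the positive-power and negative-power contributions remain a finite integer combination of allowed basis elements. Beyond that the inductive step is clean: the integral-basis property transports along the vector-space isomorphism $\fra{R}G\cong\fra{R}\langle x_1\rangle\otimes\fra{R}G_1$ precisely because the underlying set decomposition $G=\bigsqcup_a x_1^a G_1$ is already present at the group level, so no ring-theoretic subtleties intrude.
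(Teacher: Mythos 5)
The paper does not reproduce a proof of this lemma---it cites P.\ Hall's Edmonton notes~\cite{hall} for the details---so there is no in-paper argument to compare against directly. Your proof is correct and self-contained: the induction on Hirsch length $M$, the triangularity argument in the Laurent-polynomial base case (including the strong induction on $s$ via the identity $x_1^{-s}=u_1^nx_1^{-s}+\sum_{k\geq 1}\binom{n}{k}(-1)^{k+1}x_1^{k-s}$), and the use of the normal-form coset decomposition $G=\bigsqcup_{a\in\Z}x_1^aG_1$ to realize $\fra{R}G\cong\fra{R}\langle x_1\rangle\otimes_{\fra{R}}\fra{R}G_1$ as $\fra{R}$-vector spaces with a compatible $\Z$-form are all sound; the last step correctly notes that only the set-level bijection $(a,g)\mapsto x_1^ag$ is needed, so noncommutativity of the group ring is irrelevant.

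One point worth making explicit: the exponent $n$ in $u_i^n x_i^{-s_i}$ is a \emph{fixed} parameter (the paper takes $n=c+1$ where $c$ is the class of the ambient $G$). When you invoke the inductive hypothesis for $G_1=\langle x_2,\ldots,x_M\rangle$, you must retain that same $n$, not replace it by $c(G_1)+1$, since $G_1$ will in general have strictly smaller class. This is harmless for your argument---both the base case and the tensor step go through for any fixed $n\geq 1$ independently of the group's class, and moreover $G_1$ need not be a $\tau$-group (it may be abelian or trivial), so the induction should be run over all finitely generated torsion-free nilpotent groups with a poly-infinite-cyclic Mal'cev basis. Phrase the inductive hypothesis with $n$ as a free parameter and over this wider class, then specialize to $n=c+1$ at the end; with that adjustment the proof is complete.
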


\begin{defn} Let $V$ be a basis of $\fra{R}G$  described in Lemma~\ref{basis:kG:lem}. For an element  $v=v_1v_2\dots v_M \in V$ define the \emph{weight, $\mu(v)$,}  as follows:
\begin{itemize}
\item $\mu (v)\geq n$ if at least for one $1\leq i\leq M$, $v_i=u_i^nx_i^{-s_i}$. 
\item If all $v_i$ have the form $u_i^{r_i}$, $r_i\geq 0$, then, $$\mu(v)=\sum_{i=1}^{M} r_i\mu_i(u_i)$$
where $ \mu(u_i) =k$  $\Leftrightarrow$ if $x_i \in \tau_k(G) \smallsetminus  \tau_{k+1}(G)$.
\end{itemize}
\end{defn}

\begin{lem} \label{mainlem} The following statements are true.
\begin{enumerate}
\item Each quotient $I^k/I^{k+1}$, $k=0, \ldots n-1$, is spanned by the cosets of those basis elements $v$ of Lemma~\ref{basis:kG:lem}, for which $\mu(v)=k$,
\item The action of $G$ by right multiplication on the series
$$\fra{K}G/I^n  >I^1/I^n > \dots > I^n/I^n = 0,$$
  is nilpotent; that is, for $x\in G$, $u_{j}\in I^k$, and $u_j\not\in I^{k+1}$, $u_{j} x=u_{j} + y$, for some $y \in I^{k+1}$ (to unify notation we put  $I^0 = \fra{R}G$). 
 \item $\Delta_k(G) = \tau_k(G)$ for all $k$, in particular $\Delta_n(G) = \tau_n(G)=1$ and so the homomorphism $\phi_n: G \to Aut(\fra{R}G/I^n)$ is an embedding.\end{enumerate}\end{lem}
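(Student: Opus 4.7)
The plan is to prove the three assertions in the order stated; the real work lies in (1), from which (2) and the easy direction of (3) follow at once, while the reverse inclusion in (3) is the main obstacle.

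First I would establish the following key auxiliary fact by induction on $k$: if $y\in\tau_k(G)$ then $y-1\in I^k$. Two identities drive the induction. The commutator identity $ab-ba=(a-1)(b-1)-(b-1)(a-1)$, combined with $[a,b]-1=a^{-1}b^{-1}(ab-ba)$, yields the implication that $a-1\in I^i$ and $b-1\in I^j$ imply $[a,b]-1\in I^{i+j}$; this handles the generators of $\Gamma_k(G)$ inductively. The power identity $x^m-1=(x-1)(1+x+\cdots+x^{m-1})$, together with the fact that the second factor is congruent to $m$ modulo $I$, yields $m(x-1)\equiv x^m-1\pmod{I^{2l}}$ whenever $x-1\in I^l$. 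Dividing by $m$ (valid since $\fra{R}$ has characteristic zero) passes from $\Gamma_k(G)$ to its isolator $\tau_k(G)$.

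With this in hand, one inclusion in (1) is immediate: any basis element $v=v_1\cdots v_M$ whose factors all have the form $v_i=u_i^{r_i}$ lies in $I^{\sum r_i\mu(u_i)}$ by the key fact, and a factor of the form $u_i^n x_i^{-s_i}$ lies in $I^n$, so the span of $V_{\geq k}$ is contained in $I^k$. For the reverse inclusion I would induct on $k$, with base case $I^0=\fra{R}G=\operatorname{span}(V)$ given by Lemma~\ref{basis:kG:lem}. For the inductive step, write $I^k=I\cdot I^{k-1}$; by induction every element of $I^{k-1}$ is a $\Z$-combination of basis elements of weight at least $k-1$, and one rewrites products $u_i\cdot v'$ with $\mu(v')\geq k-1$ back into the basis using the integer multiplicative constants $c_{ijk}$ of $V$. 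The required weight estimates come from the commutator calculation in the key fact, since bringing the $u_i$'s into standard Mal'cev order produces extra commutator terms of strictly higher weight.

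Assertion (2) is then immediate from (1): for a basis element $u_j$ in $I^k$ and $x\in G$, $u_jx-u_j=u_j(x-1)\in I^k\cdot I\subseteq I^{k+1}$. For (3), the inclusion $\tau_k(G)\subseteq\Delta_k(G)$ is a restatement of the key fact. The main obstacle is the reverse inclusion $\Delta_k(G)\subseteq\tau_k(G)$. My plan here is: given $g\in G$ with $g-1\in I^k$, write $g$ in Mal'cev normal form $g=x_1^{a_1}\cdots x_M^{a_M}$ and expand $g-1$ as a $\Z$-linear combination of the basis $V$ via part~(1). Because $g-1\in I^k$, all coefficients attached to basis elements of weight strictly less than $k$ must vanish, and by identifying the leading basis coefficient attached to each Mal'cev generator $x_i$ with $\mu(u_i)<k$ one forces $a_i=0$ for every such generator, whence $g\in\tau_k(G)$. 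The difficulty is the bookkeeping: the expansion of $g-1$ is not term-by-term and mixes basis elements across weights through the non-commutative multiplicative structure, so extracting the leading coefficient in each $u_i$ requires carefully tracking how the factors $x_j^{a_j}$ for $j\neq i$ interact with $u_i$ in the integral basis $V$.
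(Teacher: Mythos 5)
The paper does not prove this lemma: it is stated as a package of classical facts (Jennings' theorem on dimension subgroups of $\tau$-groups, presented following Hall's Edmonton notes), and the text explicitly refers the reader to those sources for the argument, so there is no internal proof to compare yours against. Evaluated on its own terms, your proposal reconstructs the standard Jennings/Hall route and the plan is sound. The two identities you isolate, $ab-ba=(a-1)(b-1)-(b-1)(a-1)$ and $x^m-1=(x-1)(1+x+\cdots+x^{m-1})$ together with the doubling trick for dividing by $m$ in characteristic zero, are exactly the engine behind $\tau_k(G)\subseteq 1+I^k$; and part (2) is, as you note, immediate from $I^k\cdot I\subseteq I^{k+1}$ (in fact this step uses nothing from (1), only that $I$ is an ideal). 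Two remarks. First, the bookkeeping you flag as the main obstacle in (3) disappears if you argue level by level: put $l=\nu_G(g)$, so $g=\prod(1-u_i)^{a_i}$ with $a_i=0$ whenever $\mu(u_i)<l$; expanding modulo $I^{l+1}$ gives $g-1\equiv -\sum_{\mu(u_i)=l}a_iu_i \pmod{I^{l+1}}$, and the linear independence of the cosets $u_i+I^{l+1}$ supplied by (1) forces all those $a_i$ to vanish once $g-1\in I^{l+1}$, which is a contradiction unless $l\geq k$. There is no need to track interactions across different weights simultaneously. Second, and conversely, the place where real work remains is the reverse inclusion in (1): your one-sentence appeal to rewriting $u_i v'$ via the integer constants $c_{ijk}$ and the assertion that ``bringing the $u_i$'s into standard Mal'cev order produces extra commutator terms of strictly higher weight'' is precisely the collecting-process induction that carries the technical load in Hall's notes, and it deserves to be carried out rather than asserted.
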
 
 
The embedding theorem is a direct corollary of Lemma~\ref{mainlem}.

 \begin{thm} Every $\tau$-group $G$ can be embedded into $UT_d(\Z)$, where $\ds d=\sum_{k=0}^{n-1} d_k$, and $d_k$ is the dimension of $I^k/I^{k+1}$ as a $\fra{R}$-vector space.
   
\end{thm}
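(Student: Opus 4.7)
The plan is to assemble the three parts of Lemma~\ref{mainlem} into a concrete integer unitriangular representation. Part~(3) already provides an injective homomorphism $\phi_n: G \to \Aut(\fra{R}G/I^n)$ into the automorphism group of a finite-dimensional $\fra{R}$-vector space, so what remains is to exhibit a basis of $\fra{R}G/I^n$ of size $d$ in which every $\phi_n(g)$ is realized as an upper unitriangular matrix with integer entries.

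The basis will be harvested from Lemma~\ref{basis:kG:lem}: one takes the subset of the integral basis $V$ consisting of those $v$ with $\mu(v) < n$, and reduces modulo $I^n$. By part~(1) of Lemma~\ref{mainlem}, the cosets of the weight-$k$ members of $V$ span $I^k/I^{k+1}$, so taken together for $k = 0, \ldots, n-1$ they span $\fra{R}G/I^n$. Their cardinality is $\sum_{k=0}^{n-1} d_k = d = \dim_{\fra{R}}(\fra{R}G/I^n)$, so they form an $\fra{R}$-basis. Order them $e_1, \ldots, e_d$ by non-decreasing weight, with ties broken arbitrarily.

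Next, one verifies that each $\phi_n(g)$ is upper unitriangular in this basis. Part~(2) of Lemma~\ref{mainlem} says that if $e_i$ has weight $k$, then $e_i \cdot g = e_i + y$ with $y \in I^{k+1}/I^n$. Since $I^{k+1}/I^n$ is spanned by those basis vectors of weight strictly greater than $k$, and those all sit at positions $j > i$ in the chosen ordering, the $i$-th row of the matrix has a $1$ on the diagonal and zeros in columns $j < i$. Integrality of the entries follows from the integral basis property of $V$: writing $g = \sum_\ell m_\ell v_\ell$ with $m_\ell \in \Z$ and using that the multiplicative constants of $V$ are integers, each product $v_i g$ has an integer expansion in $V$; projecting to $\fra{R}G/I^n$ simply drops the weight-$\geq n$ basis vectors and retains the same integer coefficients on the $e_j$'s.

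The main thing to verify carefully is this integrality step, specifically that the projection $\fra{R}G \twoheadrightarrow \fra{R}G/I^n$ preserves integer coefficients on the retained basis. This is automatic because $V$ is an $\fra{R}$-basis of $\fra{R}G$, so the kept and discarded basis vectors are $\fra{R}$-linearly independent, and the discarded ones lie in $I^n$ by the weight definition. With this in place, $\phi_n$ maps $G$ into $UT_d(\Z)$, proving the theorem.
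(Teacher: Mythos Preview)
Your proof is correct and is precisely the intended argument: the paper itself offers no proof beyond calling the theorem a direct corollary of Lemma~\ref{mainlem}, and you have spelled out exactly that corollary (basis from the weight-$<n$ elements of $V$, unitriangularity from part~(2), integrality via the integral-basis multiplicative constants, injectivity from part~(3)). The one point to tighten is that your cardinality count and your integrality step both implicitly use that the weight-$k$ cosets form a \emph{basis} of $I^k/I^{k+1}$, not merely a spanning set as Lemma~\ref{mainlem}(1) literally states; this stronger fact is true and is part of the full Jennings--Hall result the paper is summarizing from~\cite{hall}, so just cite it as such.
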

We refer to the embedding obtained above as the \emph{Jennings embedding} of a $\tau$-group into a unitriangular group.

\subsection{Nickel's embedding}
Now we are going to consider the matrix representation for torsion-free nilpotent groups given by W. Nickel~\cite{N}.

Nickel presents an algorithm for calculating a representation by unitriangular matrices over the integers of finitely-generated torsion-free nilpotent group given by a polycyclic presentation. The algorithm uses polynomials computed by Deep Thought algorithm which describes the multiplication in the given group and is presented by Leedham-Green and Soicher~\cite{LGS}. Nickel in his notes shows the existence of a faithful unitriangular matrix representation as a simple corollary of the fact that the multiplication can be described by polynomials.

 Here we will give a brief description of Nickel's embedding and prove the Theorem \ref{le:Nickel's embedding}. 
\\
Let $G$ be a finitely generated torsion-free nilpotent group and consider the Mal'cev basis $(x_1,x_2, \dots, x_m)$ for $G$ as defined in case of Jennings' embedding.  Each $g \in G$ can be written uniquely as a normal form $g={x_1}^{a_1} \dots {x_m}^{a_m}$ with integers $a_1, \dots a_m$. 
In particular the product of two elements can be written in the same fashion 
$$ {x_1}^{a_1}\dots {x_m}^{a_m} \cdot {x_1}^{b_1} \dots {x_m}^{b_m}= {x_1}^{q_1} \dots {x_m}^{q_m}.$$
The exponents $q_1,\dots, q_m$ are functions of $a_1, \dots ,a_m$ and $b_1, \dots,b_m$. Hall~\cite{hall} showed that these functions are polynomials. We call $q_1, \dots, q_m$ the multiplication polynomials for the Mal'cev basis $(x_1, \dots, x_m)$. 
Deep Thought computes the multiplication polynomials for $(x_1,\dots, x_m)$ from Mal'cev basis $(x_1,\dots, x_m)$. The algorithm presented in Nickel's paper takes as an input Mal'cev basis and corresponding multiplication polynomials and computes a unitriangular matrix representation for $G$ over the integers.

To construct the representation W. Nickel uses the fact that the dual 
$$ (\Q G)^* =\{ f: \Q G \to \Q \: | \: \textit{f is linear} \} $$
can be seen as a G-module, where $G$ acts on  $(\Q G)^*$ as follows: for $g\in G$ and $f\in (\Q G)^*$ define $f^g$ to be the function that maps each $h\in G $ to $f(h\cdot g^{-1})$. Identifying ${x_1}^{a_1}\dots {x_m}^{a_m}$ as $a_1,\dots ,a_m$ and writing $f(a_1,\dots, a_m)$ instead of $f({x_1}^{a_1}\dots {x_m}^{a_m})$ allows us to view $\Q [ a_1,\dots, a_m]$ as a subset of $(\Q G)^*$. The image of $f\in (\Q G)^*$ under $g^{-1}={x_1}^{b_1}\dots {x_m}^{b_m}$ can be described with the help of the polynomials $q_1,\dots, q_m$, for $h={x_1}^{a_1}\dots {x_m}^{a_m}$ we have $hg^{-1}= {x_1}^{q_1}\dots {x_m}^{q_m}$. Therefore applying $g^{-1} \in G$ to a function $f$ amounts to substituting the multiplication polynomials into $f$. If $f$ is itself a polynomial on $G$, then $f(q_1, \dots,  q_m)$ is a polynomial in $a_1, \dots , a_m$ and $b_1, \dots, b_m$.
For proofs of the following two lemmas, refer to ~\cite{N}.
\begin{lem}
Let $f \in \Q[a_1,\dots,a_m]$, then $G$-module of $(\Q G)^*$ is generated by $f$ is finite-dimensional as a $\Q$-vector space.
\end{lem}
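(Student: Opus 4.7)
The plan is to bound, uniformly in $g\in G$, the total degree in $a_1,\dots,a_m$ of the polynomial representing $f^g$, and then observe that a bounded-degree subspace of $\Q[a_1,\dots,a_m]$ is finite-dimensional over $\Q$.

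First I would recall Hall's structural result on the multiplication polynomials $q_1,\dots,q_m$ associated with the Mal'cev basis $(x_1,\dots,x_m)$: each $q_i(a_1,\dots,a_m,b_1,\dots,b_m)$ is a polynomial whose degree in the $a$-variables is bounded by some constant $D$ depending only on the group $G$ (in fact on its nilpotency class and Mal'cev basis). This is the crucial ingredient, and it follows from the inductive construction of $q_i$ via commutator collection: each $q_i$ has the shape $a_i + b_i + (\text{lower-order polynomial in } a_j, b_j \text{ for } j<i)$, and the degree growth in the $a$'s is controlled by the nilpotency class.

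Next, given $f\in\Q[a_1,\dots,a_m]$ of total degree $d$, I would invoke the description of the action given in the excerpt: for $g^{-1}=x_1^{b_1}\cdots x_m^{b_m}$,
\[
f^g(a_1,\dots,a_m)=f\bigl(q_1(a,b),\dots,q_m(a,b)\bigr).
\]
Substituting polynomials of $a$-degree at most $D$ into a polynomial of total degree $d$ yields a polynomial whose $a$-degree is at most $Dd$, with coefficients that are polynomials in $b_1,\dots,b_m$. Thus, regardless of which $g\in G$ we choose, $f^g$ lies in the finite-dimensional $\Q$-subspace
\[
V_{Dd}=\bigl\{\,p\in\Q[a_1,\dots,a_m]:\deg p\leq Dd\,\bigr\}\subset(\Q G)^*.
\]

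Finally, the $G$-submodule of $(\Q G)^*$ generated by $f$ is, by definition, the $\Q$-span of the orbit $\{f^g:g\in G\}$. Since this orbit is contained in $V_{Dd}$, the submodule is contained in $V_{Dd}$ as well, hence is finite-dimensional over $\Q$. The only real obstacle is the first step—establishing the uniform bound $D$ on the $a$-degree of the Hall polynomials—but this is a well-known fact reviewed in both Hall's notes~\cite{hall} and Leedham-Green and Soicher~\cite{LGS}, so I would simply cite it rather than reprove it.
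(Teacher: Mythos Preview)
Your argument is correct. Note that the paper itself does not prove this lemma; it simply refers the reader to Nickel~\cite{N} for the proof, so there is no in-paper argument to compare against. Your degree-bounding approach is essentially the standard one.

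One minor comment: the ``obstacle'' you flag in your last paragraph---obtaining a uniform bound $D$ on the $a$-degree of the $q_i$---is not really an obstacle at all. The paper already records, just before this lemma, Hall's theorem that each $q_i$ is a polynomial in the $2m$ variables $a_1,\dots,a_m,b_1,\dots,b_m$. Once that is granted, the $a$-degree of $q_i$ is automatically bounded by its total degree, which is a fixed finite number independent of the particular values of $b_1,\dots,b_m$. You may therefore simply set $D=\max_i \deg q_i$ without any further appeal to commutator collection or the nilpotency class; the detailed shape of the $q_i$ is irrelevant for this lemma.
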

The following lemma shows how to construct a finite dimensional faithful $G$-module of $(\Q G)^*$. 
Now we are going to consider the $i$'th coordinate function mapping $t_i: G \to \Z$ which maps ${x_1}^{a_1}\dots {x_m}^{a_m}$ to $a_i$. Note that $t_i$ is well defined because each element of $G$ can be written uniquely in the form ${x_1}^{a_1}\dots {x_m}^{a_m}$.  
\begin{lem}\label{le:faithful module}
The module $M$ of $(\Q G)^*$ generated by $t_1,\dots, t_m$ is a finite dimensional faithful $G$-module.
\end{lem}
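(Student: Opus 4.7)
The plan is to split the statement into two independent claims: that $M$ is finite-dimensional over $\Q$, and that $M$ is faithful as a $G$-module.

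For finite-dimensionality, I would invoke the previous lemma directly. Each generator $t_i$ is the polynomial $a_i \in \Q[a_1,\ldots,a_m]$, so the cyclic $G$-submodule $\langle t_i\rangle_G$ of $(\Q G)^*$ it generates is finite-dimensional over $\Q$. Since
\[
M = \sum_{i=1}^{m} \langle t_i\rangle_{G}
\]
is a sum of finitely many finite-dimensional $\Q$-subspaces, it is itself finite-dimensional.

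For faithfulness, the key observation is that the coordinate functions $t_1,\ldots,t_m$ jointly recover the Mal'cev coordinates of any element, hence separate the identity from every other element of $G$. Concretely, suppose $g\in G$ acts trivially on $M$. Then $t_i^{g}=t_i$ in $(\Q G)^{*}$ for every $i$. Using the pointwise description $t_i^{g}(h)=t_i(hg^{-1})$ and evaluating at $h=1$ yields $t_i(g^{-1})=t_i(1)=0$ for $i=1,\ldots,m$. Writing $g^{-1}$ in its unique Mal'cev normal form $x_1^{c_1}\cdots x_m^{c_m}$, each $t_i$ reads off the exponent $c_i$, so the $m$ vanishing conditions force $c_1=\cdots=c_m=0$. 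Hence $g^{-1}=1$ and $g=1$.

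The only point that needs care, rather than being an obstacle, is the compatibility between the formal description of the action (substitution of the multiplication polynomials $q_1,\ldots,q_m$ into $f$) and the pointwise formula $f^g(h)=f(hg^{-1})$; the latter is what makes the evaluation at $h=1$ immediate. Once that compatibility is noted, both parts of the lemma follow without further computation.
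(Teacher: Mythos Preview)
Your argument is correct on both counts: finite-dimensionality follows immediately from the preceding lemma applied to each $t_i$ and summing, and faithfulness follows from the observation that $t_i^{g}=t_i$ evaluated at $h=1$ recovers the Mal'cev coordinates of $g^{-1}$. Note, however, that the paper does not supply its own proof of this lemma; it explicitly defers both this statement and the preceding one to Nickel~\cite{N}, so there is no in-paper argument to compare against. Your proof is the standard one and is essentially what one finds in Nickel's paper.
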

As a result of the Lemma \ref{le:faithful module}, we can see that $G$ has a matrix representation for some $n\in\N$. As part of the algorithm Nickel also explains how the unitriangular presentation of $G$ is obtained. For the details of the algorithm, refer to ~\cite{N}.

\section{Distortion of embeddings} \label{main} 
\subsection {Distortion of Jennings' embedding}
Here we discuss the distortion of the Jennings' embedding of a $\tau$-group into a unitriangular group. First we consider the simplest non-trivial example of an embedding of $\tau$-group into a unitriangular group over $\Z$. Indeed we will try to embed $G=UT_3(\Z)$ into a unitriangular group via Jennings' recipe. We shall see that the standard Jennings embedding distorts the image of $G$.  Then we will come up with a fix and prove that the embedding can be made without distortion once we choose a different order on the basis elements of $\fra{R}G$.  
   
\begin{rem} Note that a matrix in $UT_n(\Z)$ belongs to $$\Gamma_l(UT_n(\Z))\setminus \Gamma_{l+1}(UT_n(\Z))$$ if and only if at least one l'th super-diagonal entry is non-zero and all other k'th super-diagonal entries are zero for $k \leq l $. \end{rem}

\begin{exmp}\label{distorted:exmp} Let $$G= \langle x,y,z | [x,y]=z,[x,z]=[y,z] = 1 \rangle \simeq UT_3(\Z).$$
Then the ordered triple $(x,y,z)$ is a Mal'cev basis for $G$,
$$ (1+I^3,u+I^3,v+I^3,u^2+I^3,uv+I^3, v^2+I^3,w+I^3),$$
where  $u=1-x$, $v=1-y$  and $w=1-z$, is a $\fra{R}$-basis of $\fra{R}G/I^3 $. In this example we will show that the embedding $\phi:G\to UT_7(\Z)$ induced by the right action of $G$ on the ordered basis above is actually distorted.
 
 The analysis of the distortion will be easy once we sorted out the embedding.
The following are obvious.
\begin{itemize}
\item $\tau_2(G) = \Gamma_2 = \langle z \rangle$
\item $G/ \tau_2(G) = G/ \Gamma_2 = \langle x\Gamma_2, y\Gamma_2\rangle $
\item the ordered triple $(x,y,z)$ is a Mal'cev basis for $G$.
\end{itemize}
Now consider $\fra{R}G$ and let $u=1-x$, $v=1-y$  and $w=1-z$, so we have $\mu(u)=1$, $\mu(v)=1$, $\mu(w)=2$.
It is not hard to verify that 
$$ (1+I^3,u+I^3,v+I^3,u^2+I^3,uv+I^3, v^2+I^3,w+I^3)$$
is a basis for $\fra{R}G/I^3$. Indeed 
\begin{align*}
I^0/I^1 &=  span\{1+I^1\}\\
I^1/I^2 &=  span\{u+I^2,v+I^2\}\\
I^2/I^3 &=  span\{u^2+I^3 , uv+I^3, v^2+I^3, w+I^3\}.\end{align*}
for further calculations we need to express $vu$ in terms of the above basis elements. Firstly we need to express $z^{-1}$ in terms of the basis presented in Lemma~\ref{basis:kG:lem}. Repeated applications of the identity $$w^nz^{-r}=w^{n+1}z^{-r}+u^nx^{-r+1}, \: r\geq 0, n\geq 0$$ will result in 
$$z^{-1}=w^3z^{-1}+w^2+w+1,$$
where all the elements on the right hand side are basis elements. Now 
\begin{equation}\label{eq2}
\begin{split}
vu&=(1-y)(1-x)=1-y-x+yx\\
&=1-y-x+xy[y,x]=1-y-x+xyz^{-1}\\
&=1-y-x+xy(w^3z^{-1}+w^2+w+1)\\
&=v-(1-u)+(1-u)(1-v)(w^3z^{-1}+w^2+w+1)\\
&\equiv uv + w \quad (mod ~I^3).
\end{split}  \end{equation}

Now let us find matrix representations of the right actions of $x$ and $y$ on the above ordered-basis. All congruences, $\equiv$, are modulo $I^3$. 
\begin {align*} 
1 x&= 1(1-u)=1 - u \\
 u x&= u(1-u)= u-u^2 \\
 v x &=v (1-u) = v-vu \equiv v-uv -w\\ 
 u^2x&= u^2 (1-u)= u^2-u^3\equiv u^2\\
 uvx&=uv(1-u)=uv-uvu\equiv uv \\
 v^2x&=v^2(1-u)=v^2-v^2u\equiv v^2\\
 w x&= w(1-u)= w-wu \equiv w 
    \end{align*}
so, the corresponding matrix:
$$\left[\begin{array}{ccccccc} 1&-1&0&0&0&0&0 \\ 0&1&0&-1&0&0&0 \\ 0&0&1&0&-1&0&-1 \\ 0&0&0&1&0&0&0 \\ 0&0&0&0&1&0&0 \\0&0&0&0&0&1&0 \\0&0&0&0&0&0&1 \end{array}\right]$$
Let $K= UT_7(\Z)$ and $\phi: G \to K$ denote our embedding. 

Similarly, for the action of $y$ we have  
\begin {align*} 
1y&= 1(1-v)=1-v \\
 u y&= u(1-v)= u-uv\\
  vy &=v (1-v) = v-v^2 \\
    u^2y&= u^2 (1-v)= u^2-u^2v  \equiv u^2 \\
 uvy&=uv(1-v)=uv-uv^2\equiv uv \\
 v^2y&=v^2(1-v)=v^2-v^3\equiv v^2\\
wy&=w(1-v)=w-wv\equiv w
\end{align*}
so, the corresponding matrix is:
$$\left[\begin{array}{cccccccc} 1&0&-1&0&0&0&0 \\ 0&1&0&0&-1&0&0 \\ 0&0&1&0&0&-1&0 \\ 0&0&0&1&0&0&0 \\ 0&0&0&0&1&0&0 \\0&0&0&0&0&1&0 \\0&0&0&0&0&0&1 \end{array}\right]$$

The matrix representation of $z$ is found similarly and it is the matrix with -1 as the (1,7)'th entry, 1 down the diagonal and the rest of entries 0.
 
Studying the matrix $\phi(y)$  we notice that $\phi(y)\in \Gamma_2(K)\setminus \Gamma_3(K)$. This means that $\nu_G(y)=\nu_{\phi(G)}(\phi(y))=1$ but $\nu_K(\phi(y))=2$. This together with Theorem~\ref{utnZR} imply that $\phi(G)$ is distorted in $K$.  

\end{exmp}

Despite the observation made above we can come up with an easy fix for the problem by simply imposing a different order on the basis of $\fra{R}G$. 

\begin{exmp} Let's put a different ordering on the basis of $\fra{R}G/I^3$ from Example~\ref{distorted:exmp}, namely
$$ (1+I^3, v+I^3, w+I^3, u+I^3, u^2+I^3, uv+I^3, v^2+I^3).$$

Now we shall use the calculations in Example~\ref{distorted:exmp} to find the matrices of the actions of $x$, $y$ and $z$ by right multiplication on the above basis and denote them by $\psi(x)$, $\psi(y)$ and $\psi(z)$ respectively. Note we use $\psi$ to denote the embedding under the new ordering on the basis of $\fra{R}G/ I^3$.

$$\psi(x)=\left[\begin{array}{ccccccc} 1&0&0&-1&0&0&0 \\ 0&1&1&0&0&-1&0 \\ 0&0&1&0&0&0&0 \\ 0&0&0&1&-1&0&0 \\ 0&0&0&0&1&0&0 \\0&0&0&0&0&1&0 \\0&0&0&0&0&0&1 \end{array}\right]$$

$$\psi(y)=\left[\begin{array}{cccccccc} 1&-1&0&0&0&0&0 \\ 0&1&0&0&0&-1&0 \\ 0&0&1&0&0&0&0 \\ 0&0&0&1&0&-1&0\\ 0&0&0&0&1&0&0 \\0&0&0&0&0&1&0 \\0&0&0&0&0&0&1 \end{array}\right]$$

$$\psi(z)=\left[\begin{array}{ccccccc} 1&0&-1&0&0&0&0 \\ 0&1&0&0&0&0&0 \\ 0&0&1&0&0&0&0 \\ 0&0&0&1&0&0&0 \\ 0&0&0&0&1&0&0 \\0&0&0&0&0&1&0 \\0&0&0&0&0&0&1 \end{array}\right]$$

Obviously the image of $\psi$ sits inside $UT_7(\Z)$ and moreover $\nu_G(x)=\nu_G(y)=\nu_{\psi(G)}(x)=\nu_{\psi(G)}(y)=1$ and $\nu_K(\psi(x))=\nu_K(\psi(y))=1$, also $\nu_G(z)=\nu_{\psi(G)}(\psi(z))=\nu_K(\psi(z))=2$ and so by  Theorem~\ref{utnZR}, $\psi(G)$ is undistorted in $K$. Now we will see in the following theorem that we can generalize this for group of unitriangular matrices $UT_n(\Z)$ for $n \geq 3$. \end{exmp} 

\begin{thm} \label{UTNundistorted:thm} Under Jennings' embedding $G=UT_m(\Z)$ can be seen as an undistorted subgroup of $UT_d(\Z)$ for $d >m$. 
\end{thm}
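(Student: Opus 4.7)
The plan is to apply Theorem~\ref{utnZR}, which reduces the claim to exhibiting an ordering of the Jennings integral basis of $\mathfrak{R}G/I^m$ for which the resulting Jennings embedding $\phi \colon G \to K := UT_d(\mathbb{Z})$ preserves the weight, that is, $\nu_K(\phi(g)) = \nu_G(g)$ for every $g \in G \setminus \{1\}$. The inequality $\nu_K(\phi(g)) \geq \nu_G(g)$ is immediate: $\phi$ sends $\Gamma_k(G)$ into $\Gamma_k(K)$, and each $\Gamma_k(K)$ is isolated in $K$ since comparing the first nonzero super-diagonal of $M$ and of $M^r$ shows that $K/\Gamma_k(K)$ is torsion-free. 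The whole problem therefore reduces to securing the reverse inequality.

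I would next reformulate the reverse inequality more manageably. Using the canonical identification $\Gamma_k(K)/\Gamma_{k+1}(K) \cong \mathbb{Z}^{d-k}$ sending a matrix to its $k$-th super-diagonal, together with Mal'cev normal form for $G$ (using the Mal'cev basis $(s_{ij})_{1\leq i<j\leq m}$ ordered by weight $j-i$), one checks that $\nu_K(\phi(g)) \leq \nu_G(g)$ for all $g \neq 1$ is equivalent to the following assertion: for every $k \in \{1,\ldots,m-1\}$ the $k$-th super-diagonals of the matrices $\phi(s_{ij})$ with $j-i=k$ form a $\mathbb{Z}$-linearly independent family in $\mathbb{Z}^{d-k}$. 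Indeed, for $g$ with $\nu_G(g)=k$ and normal form $g = \prod s_{ij}^{a_{ij}}$ (so $a_{ij}=0$ for $j-i<k$ and $a_{i_0j_0}\neq 0$ for some $j_0-i_0 = k$), the image of $\phi(g)$ in $\Gamma_k(K)/\Gamma_{k+1}(K)$ equals $\sum_{j-i=k} a_{ij}\,\overline{\phi(s_{ij})}$, and the linear-independence condition forces this to be nonzero, hence $\phi(g) \notin \Gamma_{k+1}(K)$.

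My strategy for constructing such an ordering is to secure a \emph{private chain} for each generator $s_{ij}$: a pair of basis positions $\ell_{ij} < \ell_{ij}+k$ (with $k=j-i$) satisfying $b_{\ell_{ij}}\cdot u_{ij} \equiv c_{ij}\,b_{\ell_{ij}+k} \pmod{V'_{\ell_{ij}+k+1}}$ for some nonzero scalar $c_{ij}$, where $V'_\ell := \mathrm{span}(b_\ell, \ldots, b_d)$, and with the additional property that $(\phi(s_{i'j'})-I)_{\ell_{ij},\ell_{ij}+k}$ vanishes for every other weight-$k$ generator $s_{i'j'}$. Privacy guarantees that the submatrix of super-diagonal-$k$ entries of the $\phi(s_{ij})$ indexed by chain positions is diagonal with nonzero entries, which immediately gives the desired $\mathbb{Z}$-linear independence. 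The ordering must simultaneously arrange that every $V'_\ell$ is a $G$-submodule, which is the standard condition forcing the matrices $\phi(g)$ to be upper unitriangular (and so $\phi(G) \leq UT_d(\mathbb{Z})$). The hard part will be manufacturing such private chains for all $\binom{m}{2}$ generators in a single ordering; my plan is induction on $m$, taking the preceding Example as the base case $m=3$ and, at the inductive step, splicing basis elements built from $u_{1m},\ldots,u_{m-1,m}$ into the already-constructed ordering for $UT_{m-1}(\mathbb{Z})$ so as to create fresh private chains for the new generators $s_{jm}$ without disturbing the existing chain structure.
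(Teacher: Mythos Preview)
Your reduction via Theorem~\ref{utnZR} to the linear-independence condition on the $k$-th super-diagonals of the $\phi(s_{ij})$ with $j-i=k$ is correct, and in fact more careful than the paper's own argument, which only verifies $\nu_K(\phi(s_{ij}))=\ell$ for each Mal'cev generator and then asserts that ``the weights of elements are preserved''. Your private-chain device is exactly what makes that passage rigorous, and it is implicitly present in the paper's construction.

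The substantive difference is in how the ordering is produced. The paper does not argue by induction on $m$: it writes down one explicit ordering. First it passes to a non-standard Mal'cev basis (placing $s_{ij}$ before $s_{k\ell}$ iff $j<\ell$, or $j=\ell$ and $i>k$), and then orders the Jennings basis of $\mathfrak{R}G/I^m$ by taking as initial segment
\[
\bigl(1,\ u_{(m-1)m},\,u_{(m-2)m},\ldots,u_{1m},\ u_{(m-2)(m-1)},\ldots,u_{1(m-1)},\ \ldots,\ u_{23},\,u_{13},\ u_{12}\bigr),
\]
with all higher-degree monomials appended afterwards. The private chain for $s_{ij}$ of weight $\ell=j-i$ is then $(u_{jk},\,u_{ik})$ for any $k>j$ (and $(1,\,u_{im})$ when $j=m$): from $u_{jk}\,s_{ij}\equiv u_{jk}-u_{ij}u_{jk}-u_{ik}$ one reads off a nonzero entry, and within the block $u_{(k-1)k},\ldots,u_{1k}$ the positions of $u_{jk}$ and $u_{ik}$ differ by exactly $j-i$. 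Privacy across weight-$\ell$ generators is automatic since the pair $(u_{jk},u_{ik})$ determines $(i,j)$.

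Your inductive route would work too, but the step is heavier than you indicate: passing from $UT_{m-1}$ to $UT_m$ raises the nilpotency class, so $\mathfrak{R}G/I^m$ acquires not just the new singletons $u_{jm}$ but an entire layer of weight-$(m-1)$ monomials built from the old $u_{ij}$'s alone. Any splicing must absorb these while keeping the old private chains at their original distances. The paper bypasses this bookkeeping by confining all chains to the singleton initial segment and relegating every higher-degree monomial to the tail.
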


\begin{proof} By Theorem~\ref{utnZR} we need to find an embedding $\phi: G \rightarrow UT_d(\Z)=K$ such that if $\nu_G(g) =\ell $ for $g\in G\setminus{\{1\}}$, then $\nu_K( \phi(g))=\ell$.

We will first obtain a Mal'cev basis for $G=UT_m(\Z)$. We will denote each generator of $G$ in terms of $s_{ij}$ which represents $m\times m$ upper unitriangular matrix with $ij$'th entry $1$ as described earlier.

So
 $$ G= \langle x_1, x_2, x_3,\dots, x_{M-1}, x_M \rangle $$
where
	\begin{align*}
		x_k &= s_{ji} &\text{for }&k = j + \sum_{\ell = 1}^{i-j-1} (m-\ell).
	\end{align*}
that is;
 $$
 \begin{pmatrix}
1  &x_1    & x_m      &   \cdots    &       x_{M}  \\
   & 1      & x_2 & x_{m+1} &          \\
   &        & \ddots & \ddots &  \vdots   \\
          & 0 &  & 1  & \hspace{-2mm}x_{m-1} \\
   &        &        &            & 1
\end{pmatrix}.
$$ 

Note that $s_{ij} \in \Gamma_k(G)$ if $j-i=k$, therefore the following is a Mal'cev basis for $G$ obtained from lower central series of $G$, which is also considered to be the standard one. 
$$(x_1,x_2, \ldots x_{m-1}, \ldots x_M)$$ where $M=\frac{m(m-1)}{2}$.

However we will use a slightly different Mal'cev basis, and we leave it to the reader to check that it is indeed a Mal'cev basis for $G$. So if we reorder elements such that $s_{ij}$ shows up on the left side of $s_{k \ell}$ if and only if $j < \ell$ or $j=\ell$ and $i > k$, that is according to the scheme
	$$
	\begin{pmatrix}
	1  &x_1& x_3   & x_6 & \cdots & x_{n} \\
	& 1 & x_2   & x_{5}& & x_{n-1}\\
	&   &   1   & x_4 & &\\
	&   &       & 1      &  &\smash{\vdots}  \\
	& 0 &       &    &  \ddots & \\ 
	&   &       &      &  & 1
	\end{pmatrix}
	$$

We now set $u_i= 1-x_i$. The order imposes a ``natural" (left or right) lexicographical order on the basis of $\fra{R}G/ I^{m}$, where $c$ is the nilpotency class of $G$ and $m=c+1$. We will perturb the order  as follows
$$(1,u_{(m-1)m},u_{(m-2)m},\dots, u_{1m},u_{(m-2)(m-1)},u_{(m-3)(m-1)},\dots,u_{23},u_{13},u_{12},\ovr{u}),$$
where $u_{ij}=1-s_{ij}$ and $\ovr{u}$ represents the final segment of the basis in its natural order. Note that for simplicity we preferred to use $s_{ij}$ notation rather than $x_k$ for the generators of $G$. 
 
Now we claim that

\begin{enumerate}
\item[(a)] The right action of $G$ on this ordered basis ($mod~I^m$) is represented by a unitriangular matrix.
\item[(b)] $\nu_K(\phi(x_i))=\ell$ for $\nu_G(x_i)=\ell$ for all $i=1, \ldots ,M$, so the weights of elements are preserved under the embedding.
\end{enumerate} 
To prove (a) we will show that every elements $u_{ij}=1-s_{ij}$ in the initial segment  
$$(1,u_{(m-1)m},u_{(m-2)(m)},\dots, u_{1m},u_{(m-2)(m-1)},u_{(m-3)(m-1)},\dots,u_{23},u_{13},u_{12})$$ of the basis has an upper triangular image, because we know that otherwise the action is nilpotent by Jennings' theorem on the rest of the basis elements. Note that congruences are modulo $I^{m}$.

\begin{equation}\label{eq3}
\begin{split}
u_{j\ell}s_{ik}& = u_{j\ell} (1-u_{ik})\\&= u_{j\ell}-u_{j\ell}u_{ik}\\& \equiv u_{j\ell}-u_{ik}u_{j\ell}.
\end{split}
\end{equation}
for  $ 1\leq i,j,k,\ell \leq m$ and $j<\ell$, $i < k$, $i\neq \ell$ $k \neq j$ and with the assumption that $s_{ik}$ is on the left side of $s_{j\ell}$. Since $s_{ik}$ commutes with $s_{j \ell}$, no new single element gets introduced. Moreover, the element $u_{ik}u_{j\ell}$ comes after all the elements in the initial segment of the basis.

\begin{equation}\label{eq4}
\begin{split}
u_{ji}s_{ik}& = u_{ji} (1-u_{ik})\\&= u_{ji}-u_{ji}u_{ik} 
\end{split}
\end{equation}
for $1 \leq j <i < k \leq m$, since in the Mal'cev basis we picked, the element $s_{ji}$ comes before $s_{ik}$. 
As before, in the reordered basis of $\fra{R}G/I^m$ $u_{ji}u_{ik}$ comes after all single elements.

And finally,
\begin{equation}\label{eq5}
\begin{split}
u_{ji}s_{kj} & = u_{ji} (1-u_{kj}) \\ &= u_{ji}-u_{ji} u_{kj}\\ &\equiv u_{ji}-u_{kj}u_{ji}+u_{ki}.
\end{split}
\end{equation}
for  $ 1\leq k < j < i \leq m$,  since $[s_{ji},s_{kj}]=-s_{ki}$. Note that
$u_{ki}$ comes after $u_{ji}$ and $u_{kj}u_{ji}$ comes after all the elements in the initial segment.  
Also notice that, when the action of $s_{ij}$'s are applied to elements in the product form such as $u_{kl}u_{rs}$, no single elements shows up due to the Mal'cev basis we picked as above and so unitriangularity of the action is preserved.
This way we proved (a).

To prove (b) we need to show that weight of each element is preserved. First notice that $s_{ij}$ belongs to $\Gamma_{\ell}(G)$ if $j-i=\ell$, so to show that weights of elements are preserved, we need to show that the weight of the image of $s_{ij}$ in $K$ is also $\ell$. 
Let's have a look at the action of $s_{ij}$ for $j-i=\ell$ over the element $u_{jk}$ for $i < j < k $,
\begin{equation}\label{eq6}
\begin{split}
u_{jk}s_{ij} & =u_{jk}(1-u_{ij})\\&
= u_{jk}-u_{jk}u_{ij}\\&
\equiv u_{jk}-u_{ij}u_{jk}-u_{ik}.
\end{split}
\end{equation}
Now note that in the basis the distance between $u_{jk}$ and $u_{ik}$ is $\ell$, which proves that for each $s_{ij}$, the matrix representation $\phi(s_{ij})$, of $s_{ij}$ has at least one non-zero ${\ell}^{th}$ super-diagonal entry. This proves (b) and finishes the proof of the theorem. 
\end{proof}
\begin{thm}\label{HeisenbergJ:thm}
For $n\geq 2$ the image of any $(2n+1)$-dimensional Heisenberg group is always distorted under Jennings' Embedding. 
\end{thm}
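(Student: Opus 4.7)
The plan is to exploit the central generator $z$: I will show that under any Jennings embedding $\phi\colon H_{2n+1} \to K = UT_d(\Z)$ with $n \geq 2$, the image $\phi(z)$ is forced to have weight at least $3$ in $K$, whereas $\nu_G(z) = 2$. Theorem~\ref{utnZR} then delivers a distortion exponent of at least $3/2 > 1$.

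First I would pin down $\phi(z) - I$ explicitly. Since $z$ is central and $w = 1 - z \in I^2$, one has $b \cdot z = b - bw \equiv b \pmod{I^3}$ for every $b \in I$, while $1 \cdot z = 1 - w$. Hence $\phi(z) - I$ has exactly one nonzero entry, equal to $-1$, located at row $p_1$ and column $p_w$, where $p_1$ and $p_w$ denote the positions of the basis elements $1$ and $w$ in whatever ordering is chosen for the basis of $\fra{R}G / I^3$. Consequently $\nu_K(\phi(z)) = p_w - p_1$, and the problem reduces to bounding this gap from below.

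The lower bound will come from two structural constraints that any admissible ordering must satisfy if $\phi(G) \subseteq UT_d(\Z)$. Applying $y_k$, $x_k$ and $z$ to the basis element $1$ produces entries $-1$ in row $p_1$ at columns $p_{v_k}$, $p_{u_k}$ and $p_w$, so upper triangularity forces $p_{u_k}, p_{v_k}, p_w > p_1$ for every $k = 1, \dots, n$. Next, the relation $[x_k, y_k] = z$ gives $v_k u_k \equiv u_k v_k + w \pmod{I^3}$ exactly as in equation~(\ref{eq2}), whence
\[
v_k \cdot x_k \;\equiv\; v_k - u_k v_k - w \pmod{I^3},
\]
and the $-1$ entry of $\phi(x_k)$ at row $p_{v_k}$, column $p_w$ forces $p_w > p_{v_k}$ for each $k$.

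Combining these constraints, the $n$ distinct positions $p_{v_1}, \dots, p_{v_n}$ all lie strictly between $p_1$ and $p_w$, forcing $p_w - p_1 \geq n + 1$. For $n \geq 2$ this yields $\nu_K(\phi(z)) \geq n + 1 \geq 3$, and Theorem~\ref{utnZR} gives
\[
r \;\geq\; \frac{\nu_K(\phi(z))}{\nu_G(z)} \;\geq\; \frac{n+1}{2} \;>\; 1,
\]
so $\phi(G)$ is distorted in $K$. The point I expect to require some care is confirming that the $w$-producing inequality in the second constraint is intrinsic to $H_{2n+1}$ and cannot be dodged by a different Mal'cev basis: any such basis still separates the $n$ commuting pairs $(x_k, y_k)$ modulo $\tau_2$, and each pair contributes a $\pm w$ term when swapped into basis form in $\fra{R}G$, so an analogous inequality $p_w > p_{a_k}$ appears for $n$ distinct weight-$1$ basis elements $a_k$, leaving the counting argument intact.
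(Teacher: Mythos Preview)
Your proposal is correct and follows essentially the same approach as the paper: both arguments show that for any ordering of the basis of $\fra{R}G/I^3$ yielding an upper-triangular action, the $n$ weight-one basis elements $v_k$ (the paper's $u_{n+i}$) must lie strictly between $1$ and $w=u_{2n+1}$, forcing $\nu_K(\phi(z))\geq n+1>2=\nu_G(z)$ and hence distortion via Theorem~\ref{utnZR}. Your version is in fact more explicit than the paper's, since you spell out both constraints $p_1<p_{v_k}$ (from the action on $1$) and $p_{v_k}<p_w$ (from $v_k\cdot x_k\equiv v_k-u_kv_k-w$), whereas the paper asserts the latter without the supporting computation.
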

\begin{proof}We first like to provide a finite presentation for the generalized Heisenberg group.
\\
Now we first give a finite presentation to the generalized Heisenberg group as follows:
$$G= \langle x_1, \dots x_{2n+1} | \:R\: \rangle $$
for $s_{ij}$ described as above and $x_i$'s are defined as follows $$x_i = \begin{cases} s_{1(i+1)} &\mbox{for}\:1 \leq i\leq n \\  s_{(i-n+1)(n+2)}&\mbox{for}\:n+1 \leq i < 2n+1 \\ s_{1(n+2)} &\mbox{for}\:i=2n+1 \end{cases}$$ and the set $R$ is determined by using the fact that 
$$s_{ij}^{-1}=\left(s_{ij}(1)\right)^{-1}= s_{ij}(-1),$$  $$[s_{ij}, s_{jk}]=s_{ik}, \: \text{and}\: [s_{ji}, s_{kj}]=s_{ik}(-1) \: \text{for} \: i<j<k.$$ 
So,
$$R= \{ [x_i, x_{n+i}]=x_{2n+1}\:\text{for} \:1 \leq i\leq m \:\text{and all other pairs of}\:x_j\: \text{commute}\}.$$ 
\\
Note that $(x_1,x_2,\dots, x_{2n+1})$ is a Mal'cev Basis for generalized Heisenberg group given as above. 
Notice that for $n \geq 2$ to prove the embedding remains distorted for all possible orderings of the basis elements of $\fra{R}G/I^3$, we will show that it is impossible to preserve the weight of $x_{2n+1}$ and keep the upper-triangularity. Also note that we have that $\Gamma_2(G)= Z(G)= \langle x_{2n+1} \rangle$, so $\nu(x_{2n+1})=2$. 

Now let $u_i=1-x_i$. First note that to preserve the upper triangularity, the elements $u_{n+i}$ must show up on the left of $u_{2n+1}$, which enforces the weight of $x_{2n+1}$ to be $n+1$ and $n\geq 2$. Hence the embedding remains distorted for all possible orderings of the basis elements of $\fra{R}G/I^3$.
\end{proof}

\begin{thm}\label{distorted1:thm} Jennings's embedding can not be made without distortion for $\tau$-groups with rank of $\Gamma_c$ greater or equal to 2. 
\end{thm}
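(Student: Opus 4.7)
The plan is to show that for any Mal'cev basis of $G$ and any ordering of the basis $V$ of $\fra{R}G/I^{c+1}$ from Lemma~\ref{basis:kG:lem} that makes right multiplication unitriangular, the resulting Jennings embedding $\phi\colon G\to K=UT_d(\Z)$ admits a nontrivial $g\in\Gamma_c(G)$ with $\nu_K(\phi(g))>c=\nu_G(g)$; Theorem~\ref{utnZR} will then deliver the distortion. The key structural observation is that since $\Gamma_c\subseteq\Delta_c(G)$, we have $g-1\in I^c$ for every $g\in\Gamma_c$, so for any basis element $b\in I^k/I^{c+1}$ with $k\geq 1$ the product $b(g-1)\in I^{k+c}\subseteq I^{c+1}$ vanishes. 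The only basis element moved by $g$ is thus the constant $b_p=1+I^{c+1}$, and $b_p\cdot g-b_p=g-1$ expands as a linear combination of the $d_c$ basis elements of $I^c/I^{c+1}$, which occupy some columns $j_1<j_2<\cdots<j_{d_c}$ of the matrix. Consequently $\phi(g)-I$ is supported in row $p$ at columns $j_1,\ldots,j_{d_c}$, and its nonzero entries lie on super-diagonals $\{j_s-p\}$.

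Assembling these entries into a map $v\colon\Gamma_c\to\Z^{d_c}$ by $v(g)=\bigl((\phi(g))_{p,j_s}\bigr)_{s=1}^{d_c}$, the identity $(g_1-1)(g_2-1)\in I^{2c}\subseteq I^{c+1}$ together with the centrality of $\Gamma_c$ (since $[\Gamma_c,G]=\Gamma_{c+1}=1$) shows that $v$ is a group homomorphism; it is injective because $\phi$ is, so $v(\Gamma_c)$ is free abelian of rank $r=\mathrm{rank}(\Gamma_c)\geq 2$. Now
\[
\nu_K(\phi(g))=\min\bigl\{\,j_s-p\;:\;v(g)_s\neq 0\,\bigr\},
\]
and $\phi(g)\in\Gamma_c(K)$ forces $j_s-p\geq c$ whenever $v(g)_s\neq 0$. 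Let $t_1$ be the smallest index $s$ such that $v(g)_s\neq 0$ for some $g\in\Gamma_c$. If $j_{t_1}-p>c$, every nontrivial element of $\Gamma_c$ already has $\nu_K>\nu_G$; otherwise $j_{t_1}-p=c$, and undistortion would require the single-coordinate projection $\Gamma_c\to\Z$, $g\mapsto v(g)_{t_1}$, to be injective---impossible for a free abelian group of rank $\geq 2$. Any nontrivial $g$ in the kernel of this projection then satisfies $\nu_K(\phi(g))\geq j_{t_2}-p>c=\nu_G(g)$, and Theorem~\ref{utnZR} yields distortion.

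The main obstacle I expect is pinning down the ``rank-one bottleneck'' in the first step: no matter how $V$ is ordered, the matrices representing $\Gamma_c$ populate only a single row, so the entries lying on the critical super-diagonal $c$ form at most a rank-one image of $\Gamma_c$, which cannot distinguish a rank-$\geq 2$ free abelian group. Once this structural observation is established---independently of both the Mal'cev basis and the admissible ordering of $V$---the conclusion is forced by elementary rank counting combined with the filtration identities $I^k\cdot I^\ell\subseteq I^{k+\ell}$ and the dimension-subgroup identity $\Delta_k(G)=\tau_k(G)$ from Lemma~\ref{mainlem}.
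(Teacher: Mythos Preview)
Your argument is correct and rests on the same core observation as the paper's proof: for $g\in\Gamma_c$ the element $g-1$ lies in $I^c$, so right multiplication by $g$ fixes every basis vector of positive weight and moves only the constant $1$; hence $\phi(g)-I$ is supported in a single row, and the entries on the critical super-diagonal $c$ can occupy at most one column, forcing a rank-one bottleneck through which $\Gamma_c\cong\Z^r$ with $r\ge 2$ cannot pass injectively.

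The paper's own proof is much terser: it leans on the preceding worked example and simply asserts that each Mal'cev generator $x_i\in\Gamma_c$ contributes a single off-diagonal entry at $(1,u_i)$, so that keeping $\nu_K(\phi(x_i))=c$ would require every $u_i$ to sit in the \emph{same} slot, exactly $c$ positions after the identity. Your write-up is more careful on two points the paper leaves implicit: you do not assume that the identity basis vector is placed first in the ordering (you let it sit at an arbitrary position $p$), and you treat an arbitrary element of $\Gamma_c$ rather than only the Mal'cev generators, packaging the row-$p$ entries as an injective homomorphism $v\colon\Gamma_c\to\Z^{d_c}$ and reading off $\nu_K(\phi(g))$ as $\min\{j_s-p:v(g)_s\neq 0\}$. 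This makes the ``rank-one bottleneck'' explicit and independent of both the choice of Mal'cev basis and the admissible ordering on $V$. The trade-off is only one of exposition: the paper's version is shorter because it piggy-backs on the example, while yours is self-contained. One small cosmetic point: invoking $j_{t_2}$ in the final line is unnecessary---once $v(g)_{t_1}=0$ for a nontrivial $g$, every nonzero coordinate sits at some $s>t_1$, hence $j_s-p>j_{t_1}-p=c$, and $\nu_K(\phi(g))>c$ follows directly.
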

Before we give the proof of the Theorem~\ref{distorted1:thm} we will first have a look at an example. 
Let group $G$ be given with the following presentation 

\begin{align*}
G= \langle y_1,y_2,y_3,y_4,y_5 \: | \:  & [y_1,y_2]=y_3,\:[y_3,y_1]=y_4,\:[y_3,y_2]=y_5, \\& [y_3,y_4]=[y_3,y_5]=[y_4,y_5]=1\rangle
\end{align*}
Note that $G$ is simply the free nilpotent group of rank $2$ and of class $3$, where $(y_1,y_2,y_3,y_4,y_5)$ is a Mal'cev basis for $G$ and the weights of elements are as follows:
$$\nu(y_1)=\nu(y_2)=1, \nu(y_3)=2, \nu(y_4)=\nu(y_5)=3.$$
Under Jenning's embedding, we like to show that $G$ sits as a distorted subgroup of the unitriangular group of order $15$.

Now let $u_i=1-x_i$ for $1\leq i\leq 5$, so we can list the basis elements of $\fra{R}G/I^4$ in lexicographical order as follows:
$$(1,u_1,u_2,u_1^2,u_1u_2,u_2^2,u_3,u_1^3,u_1^2u_2,u_1u_2^2,u_1u_3,u_2^3,u_2u_3,u_4,u_5)$$

Since 
\begin{align*}
I^0/I^1 &=  span\{1+I^1\}\\
I^1/I^2 &=  span\{u_1+I^2, u_2+I^2\}\\
I^2/I^3 &=  span\{u_1^2+I^3, u_1u_2+I^3,u_2^2+I^3,u_3+I^4\}\\
I^3/I^4 &=  span\{u_1^3+I^4, u_1^2u_2+I^4, u_1u_2^2+I^4, u_1u_3+I^4, u_2^3+I^4,\\& u_2u_3+I^4,u_4+I^4, u_5+I^4\}.
\end{align*}
Note that $\{y_4,y_5\}$ generates $\Gamma_3(G)= Z(G)$.
Now we like to have a look at the actions of $y_4$ and $y_5$ over the basis elements of $\fra{R}G/I^4$.
\begin{equation}\label{eq7}
\begin{split}
& 1y_4=1(1-u_4)=1-u_4 \\& 
u_1y_4=u_1(1-u_4)\equiv u_1\\& u_2y_4=u_2(1-u_4)\equiv u_2 \\&
u_1^2 y_4 = u_1^2(1-u_4)= u_1^2-u_1^2u_4 \equiv u_1^2 \\& 
u_1u_2 y_4 =u_1u_2 (1-u_4)=u_1u_2-u_1u_2u_4 \equiv u_1u_2\\& 
u_2^2 y_4 = u_2^2 (1-u_4)= u_2^2-u_2^2u_4 \equiv u_2^2\\&
u_3y_4 = u_3 (1-u_4)=u_3-u_3u_4\equiv u_3\\&
u_1^3y_4= u_1^3(1-u_4)= u_1^3-u_1^3u_4 \equiv u_1^3 \\& \vdots \\& 
u_2u_3 y_4 = u_2u_3 (1-u_4) = u_2u_3-u_2u_3u_4 \equiv u_2u_3 \\& 
u_4y_4=u_4(1-u_4)\equiv u_4\\& u_5y_4=u_5(1-u_4)=u_5-u_5u_4\equiv u_5 \\&
\end{split}
\end{equation}

 Since $y_4$ commutes with all other elements of $G$, the only way the image of this element to have a non-zero $3^{rd}$ super-diagonal entry is placing $u_4$ exactly $3$ elements away from the identity element $1$. Notice that the same is true for $u_5$ since $y_5$ also belongs to $\Gamma_3(G)$ and its action over the basis elements of $\fra{R}G/I^4$ is the same as $y_4$. Therefore such an ordering is impossible and hence $G$ sits as a distorted subgroup of the unitriangular group $UT_{15}(\Z)$.
\\
\\
\textit{Proof of Theorem~\ref{distorted1:thm}.} As it can be seen in the above example if a $\tau$-group $G$ has $\Gamma_c(G)$ with rank greater or equal to 2, under Jennings' embedding this group sits as a distorted subgroup of a unitriangular group.  It is simply because all the elements that generates the subgroup $\Gamma_c(G)$ must be located in the same place. Considering identity is the first element in the order, generators of $\Gamma_c(G)$ need to be placed in the $c$'th location after the identity to  make sure the image of elements of weight $c$ remains unchanged. Clearly, this is impossible. 
\begin{cor} 
Under Jennings' Embedding, the image of $F(k,c)$, free nilpotent group of rank $k$ and class $c$ can not be made without distortion for $k\geq 2$ and $c\geq 3$.
\end{cor}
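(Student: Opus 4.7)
The plan is to deduce the corollary as a direct application of Theorem~\ref{distorted1:thm}. Since $F(k,c)$ is a $\tau$-group of nilpotency class exactly $c$, the only thing to verify is the hypothesis of that theorem, namely that $\mathrm{rank}(\Gamma_c(F(k,c))) \geq 2$ whenever $k \geq 2$ and $c \geq 3$. Once this inequality is in hand, Theorem~\ref{distorted1:thm} gives the distortion of the Jennings image and hence the corollary.

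To verify the rank bound, I would use that $\Gamma_{c+1}(F(k,c))=1$, so $\Gamma_c(F(k,c))$ is a finitely generated free abelian group, and its rank equals the number of Hall basic commutators of weight $c$ on $k$ free generators. By Witt's classical formula this rank is
$$W(k,c)=\frac{1}{c}\sum_{d\mid c}\mu(d)\,k^{c/d},$$
where $\mu$ is the Möbius function. A direct calculation at the extreme corner gives $W(2,3)=(2^3-2)/3=2$. For fixed $c\geq 3$ the function $W(\cdot,c)$ is non-decreasing in $k$, while for fixed $k\geq 2$ the dominant term $k^c/c$ forces $W(k,c)$ to grow rapidly with $c$; hence $W(k,c)\geq 2$ throughout the region $k\geq 2,\;c\geq 3$.

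The only potential obstacle is the combinatorial bound on $W(k,c)$, but this is a routine check at the boundary case $(k,c)=(2,3)$ and trivial elsewhere. If one prefers to avoid Witt's formula entirely, the equivalent elementary route is to exhibit two explicit Hall basic commutators of weight $c$ on $x_1,\ldots,x_k$ — for instance the left-normed commutators $[\,[\,\cdots[x_2,x_1],x_1],\ldots,x_1\,]$ with $c-2$ trailing copies of $x_1$, and $[\,[\,\cdots[x_2,x_1],x_1],\ldots,x_1,x_2\,]$ with $c-3$ trailing copies of $x_1$ followed by one $x_2$ — and invoke the Hall basis theorem to conclude their linear independence in $\Gamma_c(F(k,c))$, again yielding $\mathrm{rank}(\Gamma_c(F(k,c)))\geq 2$.
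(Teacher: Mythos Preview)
Your proposal is correct and is exactly the intended deduction: the paper states the corollary immediately after Theorem~\ref{distorted1:thm} with no separate proof, relying implicitly on the well-known fact that $\mathrm{rank}\,\Gamma_c(F(k,c))\geq 2$ for $k\geq 2$, $c\geq 3$ (illustrated just above in the worked example $F(2,3)$, where $\Gamma_3=\langle y_4,y_5\rangle$). You supply precisely that missing verification, via Witt's formula or, equivalently, by exhibiting two Hall basic commutators of weight $c$; either route is fine and matches the paper's approach.
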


\subsection{Distortion of Nickel's Embedding}
\indent 
\begin{thm}\label{UTNundistortedN:thm} Under Nickel's embedding $G=UT_m(\Z)$ can be seen as an undistorted subgroup of $UT_d(\Z)$ for $d=n +1$, where $n=\frac{m(m-1)}{2}$. 
\end{thm}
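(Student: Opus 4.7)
\textit{Plan.} The strategy parallels the proof of Theorem~\ref{UTNundistorted:thm} from the Jennings case: fix a convenient Mal'cev basis of $G = UT_m(\Z)$, describe Nickel's module $M \subseteq (\Q G)^*$ explicitly, choose an ordering of its basis that makes the induced representation simultaneously unitriangular and weight-preserving, and conclude via Theorem~\ref{utnZR}.

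First, I would fix the Mal'cev basis of $G$ by listing the generators $s_{ij}$ in order of decreasing $j$. Under this ordering no ``cross terms'' arise when forming $\prod s_{ij}^{a_{ij}}$, so the Mal'cev coordinates $a_{ij}$ of any $h \in G$ coincide with its matrix entries $h_{ij}$. The multiplication polynomials are then the bilinear expressions $q_{ij}(a,b) = a_{ij} + b_{ij} + \sum_{i<k<j} a_{ik}\,b_{kj}$ coming from matrix multiplication, and in particular they are affine in the $a$-variables for fixed $b$. This forces the $G$-orbit of each coordinate function $t_{ij}$ into the $\Q$-span of $\{1\} \cup \{t_{kl}\}$, and taking $g = s_{ij}^{-1}$ shows that $1$ itself belongs to $M$. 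Hence $M = \mathrm{span}_\Q \{1, t_{ij} : 1 \le i < j \le m\}$ has dimension $d = n+1$.

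Second, I would order this basis lexicographically with $i$ decreasing as primary key and $j$ decreasing as secondary key, and place $1$ at the last position $d$; writing $\pi(v)$ for the position of a basis vector $v$, one computes $\pi(t_{ij}) = \binom{m-i}{2} + (m-j) + 1$. The action formula
$$t_{ij} \cdot g \;=\; t_{ij} + \sum_{i<k<j} (g^{-1})_{kj}\, t_{ik} + (g^{-1})_{ij}$$
makes the corresponding matrix of the representation $\phi : G \to UT_d(\Z)$ upper unitriangular, because every basis vector appearing with nonzero coefficient beyond $t_{ij}$ itself has strictly larger $\pi$. Moreover, the identities $\pi(t_{ip}) - \pi(t_{iq}) = q - p$ (for $i<p<q$) and $d - \pi(t_{1q}) = q - 1$ show that the $\ell$-th super-diagonal entries of $\phi(g)$ are precisely the entries $(g^{-1})_{kj}$ with $j - k = \ell$. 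Since inversion in $UT_m(\Z)$ preserves the position of the first nonzero super-diagonal of a matrix, this yields $\nu_K(\phi(g)) = \nu_G(g)$ for every $g \neq 1$, where $K = UT_d(\Z)$.

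Finally, Theorem~\ref{utnZR} gives $r = \max_{h \in \phi(G) \setminus \{1\}} \nu_K(h)/\nu_{\phi(G)}(h) = 1$, so the distortion function is linear and $\phi(G)$ is undistorted in $K$. The main obstacle is the second step: the ordering must simultaneously realize the correct super-diagonal offset for every generator $s_{pq}$ while remaining compatible with unitriangularity. The lexicographic ``$i$-decreasing, $j$-decreasing'' order above is the natural analogue of the re-ordering trick used in Theorem~\ref{UTNundistorted:thm} for Jennings' embedding; once chosen, the verification reduces to the elementary arithmetic with $\pi$ indicated above.
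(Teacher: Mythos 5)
Your proposal follows the same overall strategy as the paper's proof: pick a non-standard Mal'cev basis for $G=UT_m(\Z)$, identify Nickel's module $M\subset(\Q G)^*$ with $\mathrm{span}_\Q\{1,t_{ij}\}$ (so $\dim M = n+1$), order that basis so the right action is unitriangular and weight-preserving, and conclude by Theorem~\ref{utnZR}. The concrete orderings differ (the paper lists the $s_{ij}$ by column with $i$ decreasing within a column, and lists the $t_{ij}$ by column with $i$ increasing; you list the $s_{ij}$ by decreasing $j$ and the $t_{ij}$ by decreasing $i$ then decreasing $j$), and your observation that the chosen Mal'cev ordering makes Mal'cev coordinates coincide with matrix entries is a genuine clean-up that makes the action formula transparent; but the route is the same.

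Two points need tightening. First, ``in order of decreasing $j$'' does not pin down the order within a column, and not every within-column order gives a Mal'cev basis: in $UT_4$, listing column $4$ top-to-bottom as $s_{14},s_{24},s_{34}$ makes the second term of the chain contain $[s_{12},s_{24}]=s_{14}$, so the first quotient is trivial rather than $\Z$. You need the within-column order to be bottom-to-top (decreasing $i$); the coordinate-equals-entry property is unaffected by this, but the polycyclic chain is. Second, the assertion that ``the $\ell$-th super-diagonal entries of $\phi(g)$ are precisely the entries $(g^{-1})_{kj}$ with $j-k=\ell$'' is not quite right: the last-column entries of $\phi(g)$ at row $\pi(t_{pq})$ equal $(g^{-1})_{pq}$ but sit at super-diagonal level $d-\pi(t_{pq})=\binom{m}{2}-\binom{m-p}{2}-(m-q)$, which is strictly larger than $q-p$ whenever $p>1$. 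This does not spoil the conclusion, because $d-\pi(t_{pq})\ge q-p$ always and equality holds at $p=1$, so the minimum nonzero super-diagonal of $\phi(g)$ is still $\nu_G(g)$ (achieved by a $(\pi(t_{ij}),\pi(t_{ik}))$ entry if some minimal-level entry of $g^{-1}$ has row index $\ge 2$, and by a $(\pi(t_{1q}),d)$ entry otherwise); but you should state and verify this inequality rather than claim an exact level correspondence.
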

\begin{proof}
As in Jennings' we will use a non-standard Mal'cev basis such that $s_{ij}$ shows up on the left of $s_{k\ell}$ if and only if $j < \ell$ or $j=\ell$ and $i > k$, that is; according to the scheme
	$$
	\begin{pmatrix}
	1  &x_1& x_3   & x_6 & \cdots & a_{n} \\
	& 1 & x_2   & x_{5}& & x_{n-1}\\
	&   &   1   & x_4 & &\\
	&   &       & 1      &  &\smash{\vdots}  \\
	& 0 &       &    &  \ddots & \\ 
	&   &       &      &  & 1
	\end{pmatrix}
	$$
\\
Now we like to show that if the following order is selected for the basis elements of $G$-module, the embedding becomes undistorted. For simplicity we will use $s_{ij}$ instead of $x_k$ for the generators of $G$, and so similarly we will use $t_{ij}$ notation instead of $t_k$ for the coordinate functions.

$$ (t_{12},t_{13},t_{23}, \dots, t_{1j},t_{2j},\dots, t_{(j-1)j}, \dots t_{1m},t_{2m}, \dots t_{(m-1)m},1)$$
where $2 \leq j\leq m$.	
Also note that the following is the Mal'cev basis we picked for $G$ in $s_{ij}$ notation.
$$( s_{12},s_{23},s_{13},\dots s_{(j-1)j}, s_{(j-2)j},\dots, s_{1j}, \dots, s_{(m-1)m},s_{(m-2)m},\dots,s_{1m})$$
where $2 \leq j\leq m$

Now we claim that

\begin{enumerate}
\item[(a)]  The right action of $G$ on this ordered basis of $G$-module is represented by a unitriangular matrix,
\item[(b)] $\nu_K(\phi(s_{kl}))=\ell$ for $\nu_G(s_{kl})=\ell$ for all $1 \leq \ell \leq m-1$ and $K= UT_{n+1}(\Z)$, so the weights of elements are preserved under the embedding.
\end{enumerate} 

We show both (a) and (b) holds by simply looking at the action of $s_{kl}$ over each $t_{ij}$, where $l-k=\ell$. 
So let's have a look at the following multiplication. 
\begin{align*}
s_{12}^{a_{12}}s_{23}^{a_{23}}s_{13}^{a_{13}}\dots s_{(j-1)j}^{a_{(j-1)j}} s_{(j-2)j}^{a_{(j-2)j}}\dots s_{1j}^{a_{1j}}\dots s_{(m-1)m}^{a_{(m-1)m}}s_{(m-2)m}^{a_{(m-1)m}}\dots s_{1m}^{a_{1m}}s_{kl}^{-b}
\end{align*}
Now note that $s_{kl}$ doesn't commute with only the elements of the form $s_{lr}$ and $s_{pk}$, where $p < k < l < r$, and $s_{pk}$ is on the left side of $s_{kl}$ in the Mal'cev basis
\begin{align*}
s_{12}^{a_{12}}& s_{23}^{a_{23}}s_{13}^{a_{13}} \dots s_{pk}^{a_{pk}} \dots s_{kl}^{a_{kl}}\dots s_{lr}^{a_{lr}}s_{kl}^{-b}\dots  s_{(m-1)m}^{a_{(m-1)m}}s_{(m-2)m}^{a_{(m-1)m}}\dots s_{1m}^{a_{1m}}\\
&=s_{12}^{a_{12}}s_{23}^{a_{23}}s_{13}^{a_{13}} \dots  s_{pk}^{a_{pk}} \dots  s_{kl}^{a_{kl}-b}\dots s_{kr}^{a_{kr}-ba_{lr}}\dots  s_{(m-1)m}^{a_{(m-1)m}}s_{(m-2)m}^{a_{(m-1)m}}\dots s_{1m}^{a_{1m}}
\end{align*}

\begin{align*}
t_{ij}^{s_{kl}^{-b}}&(s_{12}^{a_{12}}s_{23}^{a_{23}}s_{13}^{a_{13}}\dots s_{(j-1)j}^{a_{(j-1)j}} s_{(j-2)j}^{a_{(j-2)j}}\dots s_{1j}^{a_{1j}}\dots s_{(m-1)m}^{a_{(m-1)m}}s_{(m-2)m}^{a_{(m-1)m}}\dots s_{1m}^{a_{1m}}) \\&=
t_{ij}(s_{12}^{a_{12}}s_{23}^{a_{23}}s_{13}^{a_{13}} \dots  s_{pk}^{a_{pk}} \dots  s_{kl}^{a_{kl}-b}\dots s_{kr}^{a_{kr}-ba_{lr}}\dots  s_{(m-1)m}^{a_{(m-1)m}}s_{(m-2)m}^{a_{(m-1)m}}\dots s_{1m}^{a_{1m}})
\end{align*}
 so we have the following;
 \begin{align*}
  t_{kl}(\bar{s}^{\bar{a}} s_{kl}^{-b}) & = a_{kl}-b=t_{kl}-b \\
 t_{kr}(\bar{s}^{\bar{a}} s_{kl}^{-b})&= a_{kr}-ba_{lr}=t_{kr}-bt_{lr} \\
 t_{ij}(\bar{s}^{\bar{a}} s_{kl}^{-b})&= a_{ij}=t_{ij},\: \textit{for any}\:i \neq k\:
  \end{align*}
 where
 $$\bar{s}^{\bar{a}}=s_{12}^{a_{12}}s_{23}^{a_{23}}s_{13}^{a_{13}}\dots s_{(j-1)j}^{a_{(j-1)j}} s_{(j-2)j}^{a_{(j-2)j}}\dots s_{1j}^{a_{1j}}\dots s_{(m-1)m}^{a_{(m-1)m}}s_{(m-2)m}^{a_{(m-1)m}}\dots s_{1m}^{a_{1m}}$$
 
	First notice that with the non-standard Mal'cev basis, $UT_m(\Z)$ is embedded into $UT_{n+1}(\Z)$ where $n=\frac{m(m-1)}{2}$, since the actions of the group elements over coordinate functions only creates functions that are already linear combinations of those coordinate functions. So we only need to add the identity element to the basis of $G$-module.

We can see that with the order taken on the basis of $G$-module the action is represented by a unitriangular matrix, since $t_{lr}$ is on the right side of $t_{kr}$. Also the weight of image of $s_{kl}$ is also $\ell$ since the number of basis elements between $t_{kr}$ and $t_{lr}$ is $\ell-1$, so we proved both (a) and (b).  
\end{proof}

\begin{thm}\label{le:Nickel's embedding}
For $n\geq 2$ the image of any $(2n+1)$-dimensional Heisenberg group under Nickel's Embedding is always distorted.
\end{thm}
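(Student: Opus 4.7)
The plan is to follow the philosophy of Theorem~\ref{HeisenbergJ:thm}: show that the combinatorial constraints built into Nickel's construction force the image of the central generator $x_{2n+1}$ to lie too deep in the lower central series of the ambient unitriangular group, so that Osin's criterion (Theorem~\ref{utnZR}) yields distortion. I will use the Mal'cev basis $(x_1,\ldots,x_{2n+1})$ from the presentation of the $(2n+1)$-dimensional Heisenberg group $G$ given in Theorem~\ref{HeisenbergJ:thm}, in which the only non-trivial commutators are $[x_i,x_{n+i}]=x_{2n+1}$ for $1\leq i\leq n$. Rewriting $h\cdot x_k^{-1}$ in normal form for $h=x_1^{a_1}\cdots x_{2n+1}^{a_{2n+1}}$ and tracking which generators produce central corrections, I expect to obtain, for the coordinate functions $t_j$ of Lemma~\ref{le:faithful module}, the actions
\[
t_i^{x_i}=t_i-1,\qquad t_{2n+1}^{x_i}=t_{2n+1}+t_{n+i}\qquad(1\leq i\leq n),
\]
\[
t_{n+i}^{x_{n+i}}=t_{n+i}-1\quad(1\leq i\leq n),\qquad t_{2n+1}^{x_{2n+1}}=t_{2n+1}-1,
\]
every other coordinate being fixed by the relevant generator. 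Consequently the $G$-module $M$ generated by $t_1,\ldots,t_{2n+1}$ is spanned over $\Q$ by $\{1,t_1,\ldots,t_{2n+1}\}$, so $\dim_\Q M=2n+2$ and Nickel's embedding $\phi$ takes values in $K=UT_{2n+2}(\Z)$ once an ordering of this basis is fixed.

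I then read off the constraints that upper-triangularity of all the matrices $\phi(x_k)$ imposes on such an ordering. Since each generator subtracts the constant $1$ from some coordinate, the function $1$ must precede every $t_j$, and hence $1$ is forced to occupy the very first position. Since $\phi(x_i)$ for $1\leq i\leq n$ sends $t_{2n+1}\mapsto t_{2n+1}+t_{n+i}$, each of the $n$ distinct coordinate functions $t_{n+1},\ldots,t_{2n}$ must strictly precede $t_{2n+1}$. Therefore $t_{2n+1}$ occupies a position $j\geq n+2$ while $1$ sits at position $i=1$. Since $\phi(x_{2n+1})$ fixes every basis vector except $t_{2n+1}$ and acts by $t_{2n+1}\mapsto t_{2n+1}-1$, its matrix is exactly the transvection $I-E_{1,j}$, so
\[
\phi(x_{2n+1})\in\Gamma_{j-1}(K)\subseteq\Gamma_{n+1}(K),
\]
i.e., $\nu_K(\phi(x_{2n+1}))\geq n+1$. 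On the other hand $x_{2n+1}$ generates the center $\Gamma_2(G)$, so $\nu_{\phi(G)}(\phi(x_{2n+1}))=\nu_G(x_{2n+1})=2$. For $n\geq 2$,
\[
\frac{\nu_K(\phi(x_{2n+1}))}{\nu_{\phi(G)}(\phi(x_{2n+1}))}\geq\frac{n+1}{2}>1,
\]
and Theorem~\ref{utnZR} then yields $\Delta^K_{\phi(G)}(m)\succeq m^{(n+1)/2}$, so $\phi(G)$ is distorted in $K$.

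The most delicate step is the Mal'cev rewriting underlying the action formulas above; the crucial feature is an asymmetry between $x_i$ and $x_{n+i}$. Moving $x_i^{-1}$ leftward past the normal form forces a crossing with $x_{n+i}^{a_{n+i}}$, which produces the central correction $x_{2n+1}^{a_{n+i}}$ responsible for the summand $t_{n+i}$ in $t_{2n+1}^{x_i}$. Moving $x_{n+i}^{-1}$ leftward, by contrast, never meets its partner $x_i$, because $x_i$ already sits earlier in the basis, so no such correction arises. Once this computation is verified, the positional count above is immediate and closely mirrors the Jennings analysis of Theorem~\ref{HeisenbergJ:thm}.
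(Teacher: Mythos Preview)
Your proof is correct and follows essentially the same route as the paper's: compute the action of each generator on the coordinate functions, observe that the $G$-module is spanned by $\{1,t_1,\ldots,t_{2n+1}\}$, extract the ordering constraints forced by upper-triangularity, and conclude that $\phi(x_{2n+1})$ lands in $\Gamma_{n+1}(K)$ while $\nu_G(x_{2n+1})=2$. The only visible difference is a harmless convention flip: the paper uses the row convention (so the constant function $1$ sits at the \emph{end} of the ordered basis and $t_{n+1},\ldots,t_{2n}$ come \emph{after} $t_{2n+1}$), whereas you use the column convention (so $1$ is first and $t_{n+1},\ldots,t_{2n}$ precede $t_{2n+1}$); either way the distance between the positions of $1$ and $t_{2n+1}$ is at least $n+1$, which is all that matters.
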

\begin{proof} First note that, by Theorem \ref{UTNundistortedN:thm}, for $3$-dimensional Heisenberg group the statement is not true.
\\
Now let's show that under Nickel's embedding for $n \geq 2$, the image of any $(2n+1)$-dimensional Heisenberg group is always distorted. 
In this proof we will use the same presentation for the generalized Heisenberg group as described in the study of distortion of Jennings' embedding.
\\ 
For simplicity we will write $ x_1^{a_1}x_2^{a_2} \dots x_{2n+1}^{a_{2n+1}}=\bar{x}^{\bar{a}}$. We will now show that the following is true. \\
For $ 1\leq j \leq n $,
$$t_i^{{x_j}^k}(\bar{x}^{\bar{a}})= \begin{cases}{a_{j}-k} &\mbox{for}\: i=j \\ a_i &\mbox{for}\:i\neq j\:\text{and}\: i\neq 2n+1 \\ a_{2n+1}+ka_{n+j} &\mbox{for}\:i=2n+1 \end{cases} $$
For $ n+1 \leq j \leq 2n+1 $,

$$t_i^{{x_j}^k}(\bar{x}^{\bar{a}})= \begin{cases}{a_{j}-k} &\mbox{for}\: i=j \\ a_i &\mbox{for}\:i\neq j \end{cases} $$

In order to see this, we only need to have a look at
$$ \bar{x}^{\bar{a}}{x_j}^{-k}=x_1^{a_1}x_2^{a_2}\dots x_{2n+1}^{a_{2n+1}}{x_j}^{-k}.$$

For $ 1\leq j \leq n $, since $s_{1(j+1)}=x_j$ commutes with all except $s_{(j+1)(n+2)}=x_{n+j}$ and also $[s_{1(j+1)}^{-k},s_{(j+1)(n+2)}^{-a_{n+j}}]=s_{1(n+2)}^{ka_{n+j}}=x_{2n+1}^{ka_{n+1}}$, we have   
\begin{align*}
\bar{x}^{\bar{a}}{x_j}^{-k}&= s_{12}^{y_1}s_{13}^{a_2} \dots s_{1(j+1)}^{a_j} \dots s_{2(n+2)}^{a_{n+1}} \dots s_{1(n+2)}^{a_{2n+1}}{s_{1(j+1)}}^{-k} \\
&= s_{12}^{a_1}s_{13}^{a_2} \dots s_{1(j+1)}^{a_j} \dots s_{1(j+1)}^{-k}{s_{1(j+1)}}^ks_{(j+1)(n+2)}^{a_{n+j}}s_{1(j+1)}^{-k}
s_{(j+1)(n+2)}^{-a_{n+j}}\\& s_{(j+1)(n+2)}^{a_{n+j}} \dots s_{2(n+2)}^{a_{n+1}} \dots s_{1(n+2)}^{a_{2n+1}} \\
&=s_{12}^{a_1}s_{13}^{a_2} \dots s_{1(j+1)}^{a_j-k} \dots [s_{1(j+1)}^{-k},s_{(j+1)(n+2)}^{-a_{n+j}}]\dots s_{2(n+2)}^{a_{n+1}} \dots s_{1(n+2)}^{a_{2n+1}} 
\\&= s_{12}^{a_1}s_{13}^{a_2} \dots s_{1(j+1)}^{a_j-k} \dots s_{1(n+2)}^{ka_{n+j}}\dots s_{2(n+2)}^{a_{n+1}} \dots s_{1(n+2)}^{a_{2n+1}} 
\\&=s_{12}^{a_1}s_{13}^{a_2} \dots s_{1(j+1)}^{a_j-k} \dots s_{2(n+2)}^{a_{n+1}} \dots s_{1(n+2)}^{a_{2n+1}+ka_{n+j}}.
\end{align*}
For $ n+1\leq j \leq 2n$,
\\
Since $s_{(j-n+1)(n+2)}^{a_j}$ for $ n+1\leq j \leq 2n $ commutes with all other elements.
\begin{align*}
\bar{x}^{\bar{a}}{x_j}^{-k}&= s_{12}^{a_1}s_{13}^{a_2} \dots s_{1(n+1)}^{a_n} \dots s_{1(j+1)}^{a_j} \dots s_{1(n+2)}^{a_{2n+1}}{s_{(j-n+1)(n+2)}}^{-k} \\&=s_{12}^{a_1}s_{13}^{a_2} \dots s_{1(n+1)}^{a_n} \dots s_{(j-n+1)(j+1)}^{a_j-k} \dots s_{1(n+2)}^{a_{2n+1}}\end{align*}
Finally for $j=2n+1$, since $x_{2n+1}=s_{1(n+1)}$ is the last element in $\bar{x}^{\bar{a}}$, we have 
$$\bar{x}^{\bar{a}}{x_{2n+1}}^{-k}= s_{12}^{a_1}s_{13}^{a_2} \dots s_{1(n+1)}^{a_n}  \dots s_{1(n+2)}^{a_{2n+1}-k}$$

In conclusion, because of the fact that all new polynomials are linear combination of $t_i$'s, we have $\{t_1,t_2, \dots, t_{2n+1}, 1\}$ as the $\Q$-basis for the $G$-module. Hence we obtain the embedding $\phi: G \to UT_{2n+2}(\Z)$.  Notice that for any ordering on $\{t_1,t_2, \dots, t_{2n+1}, 1\}$ to result in a unitriangular form, all $t_i$'s for $n<i<2n+1$ must appear after the element $t_{2n+1}$ and the identity 1, which enforces the weight of the image of element $x_{2n+1}$ be at least $n+1$ for $n\geq 2$. So we have $\nu_G(x_{2n+1})=\nu_{\phi(G)}( \phi(x_{2n+1}))=2$ and $\nu_K(\phi(x_{2n+1})) \geq n+1 \geq 3$ since $n\geq2$. 
Therefore under the Nickel's embedding for $n\geq 2$ the image of $(2n+1)$-dimensional Heisenberg group in the unitriangular group is always distorted.
\end{proof}
\begin{exmp}
Let us take a look at the $5$-dim Heisenberg group $G$.
\begin{align*} G= \langle x_1,x_2,x_3,x_4,x_5 \mid & [x_1,x_3]=[x_2,x_4]=x_5,\\& [x_1,x_5]=[x_2,x_5]=[x_3,x_5]=[x_4,x_5]=1 \rangle \end{align*}
Notice that $\Gamma_1(G)=\langle x_1,x_2,x_3,x_4,x_5 \rangle$,  $\Gamma_2(G)= Z(G)= \langle x_5 \rangle$ and $\Gamma_3(G)=1$
Also we can see the elements of $G$ as follows:
$$x_1=\left[\begin{array}{cccc} 1&1&0&0 \\ 0&1&0&0 \\ 0&0&1&0\\ 0&0&0&1  \end{array}\right], x_2=\left[\begin{array}{cccc} 1&0&1&0 \\ 0&1&0&0 \\ 0&0&1&0\\ 0&0&0&1  \end{array}\right], x_3=\left[\begin{array}{cccc} 1&0&0&0 \\ 0&1&0&1 \\ 0&0&1&0\\ 0&0&0&1  \end{array}\right]$$ $$x_4=\left[\begin{array}{cccc} 1&0&0&0 \\ 0&1&0&0 \\ 0&0&1&1\\ 0&0&0&1  \end{array}\right], x_5=\left[\begin{array}{cccc} 1&0&0&1 \\ 0&1&0&0 \\ 0&0&1&0\\ 0&0&0&1  \end{array}\right]$$

We can also take the following as the polycylic series for $G$. 
\begin{align*} G&= G_1=\langle x_1,x_2,x_3,x_4,x_5 \rangle  \geq G_2= \langle x_2,x_3,x_4,x_5 \rangle  \geq \\& G_3=\langle x_3,x_4,x_5 \rangle \geq G_4= \langle x_4,x_5 \rangle \geq G_5= \langle x_5 \rangle  \geq G_6=\langle 1 \rangle \end{align*}
In order to find the image of $G$ under Nickel's embedding $\phi$, we need to have a look at the images of the following products under the coordinate functions.

\begin{align*} x_1^{a_1}x_2^{a_2}x_3^{a_3}x_4^{a_4}x_5^{a_5}x_1^{-k} & = x_1^{a_1}x_2^{a_2}x_3^{a_3}x_4^{a_4}x_1^{-k}x_4^{a_4}x_5^{a_5} \\ & = x_1^{a_1}x_2^{a_2}x_1^{-k}x_1^{k} x_3^{a_3}x_1^{-1}x_3^{-a_3}x_3^{a_3}x_4^{a_4}x_5^{a_5} \\ & = x_1^{a_1-k}x_2^{a_2}[x_1^{-k},x_3^{-a_3}]x_3^{a_3}x_4^{a_4}x_5^{a_5} \\ & = x_1^{a_1-k}x_2^{a_2}x_5^{ka_3}x_3^{a_3}x_4^{a_4}x_5^{a_5}\\ & = x_1^{a_1-k}x_2^{a_2}x_3^{a_3}x_4^{a_4}x_5^{a_5+ka_3}\end{align*}
We leave it to the reader to check all other $x_1^{a_1}x_2^{a_2}x_3^{a_3}x_4^{a_4}x_5^{a_5}x_i^{-k}$, for $1< i\leq 5$. 

We can see that all the exponents in the product after the multiplication by ${x_i}^{-k}$ are $\Q$-linear combinations of $t_i$'s, so we must have $\{t_1, \dots, t_5,1\}$ as the $\Q$-basis for the $G$-module. Now notice that in order to have a unitriangular representation, $t_4$ and $t_3$ must be placed after $t_5$ and in such an ordering only the element $x_i$ that corresponds to the one next to $t_5$ will have an undistorted image, whereas the other element will have a distorded image. If, for example, $t_3$ is placed right after $t_5$, $x_3$ will have an undistorted image since it will have a non-zero super-diagonal and so it will be in $\Gamma_1(UT_6(\Z)) \setminus \Gamma_2(UT_6(\Z))$, however $x_4$ will have an image in $\Gamma_2(UT_6(\Z))$, that is; $\nu_{G}(x_4)=\nu_{\phi(G)}(\phi(x_4))=1$ and $\nu_{UT_6(\Z)}(\phi(x_4))=2$, and so the image of $G$ is distorted in $UT_6(\Z)$. 
\end{exmp}
\begin{thm}\label{distorted2:thm} Similar to Jennings', Nickel's embedding can not be made without distortion for $\tau$-groups with rank of $\Gamma_c$ greater or equal to 2. 
\end{thm}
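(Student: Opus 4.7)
The plan is to adapt the pigeonhole argument from Theorem~\ref{distorted1:thm} to Nickel's setup, using the fact that central generators of $\Gamma_c(G)$ act on coordinate functions in a particularly rigid way.

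First I would set up notation: let $G$ be a $\tau$-group of class $c$ with $s := \mathrm{rank}(\Gamma_c(G)) \geq 2$, and fix a Mal'cev basis $(x_1, \dots, x_m)$ whose last $s$ elements $x_{m-s+1}, \dots, x_m$ generate $\Gamma_c(G)$; each of these $x_k$ is central in $G$ with $\nu_G(x_k) = c$. The key computation is immediate from centrality: the normal form of $\bar{x}^{\bar{a}} x_k^{-1}$ is obtained by decrementing $a_k$, so $t_k^{x_k^{-1}} = t_k - 1$ and $t_i^{x_k^{-1}} = t_i$ for $i \neq k$. Consequently, in any Nickel matrix $\phi(x_k)$, taken with respect to any ordered $\Q$-basis $\mathcal{B}$ of the module $M$ (which necessarily contains the constant function $1$ together with $t_1, \dots, t_m$), the row of $t_k$ in $\phi(x_k) - I$ has a single non-zero entry $-1$ in the column of $1$, lying on super-diagonal $p(1) - p(t_k)$, where $p(\cdot)$ records positions in $\mathcal{B}$.

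Next, I would invoke Theorem~\ref{utnZR}: an undistorted embedding requires $\nu_{UT_d(\Z)}(\phi(x_k)) = \nu_G(x_k) = c$ for each central generator of weight $c$, which forces the super-diagonal $p(1) - p(t_k)$ to equal $c$; equivalently, $t_k$ must occupy the unique basis position lying exactly $c$ places before $1$. For $s \geq 2$ central generators, all of $t_{m-s+1}, \dots, t_m$ would have to share this single position, which is impossible. Hence at least one image $\phi(x_k)$ ends up in $\Gamma_{c+1}(UT_d(\Z))$, producing $\nu_{UT_d(\Z)}(\phi(x_k)) > c = \nu_G(x_k)$ and, by Osin's theorem, distortion.

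The main obstacle is to rule out that weight $c$ could be realized via a different entry of $\phi(x_k) - I$ coming from a polynomial basis element of $M$ involving $t_k$; such basis elements can appear when $c \geq 3$ and $M$ strictly contains $\mathrm{span}\{t_1, \dots, t_m, 1\}$. Every such row has the form $t_k h \mapsto t_k h - h$ (plus lower-order contributions from higher powers of $t_k$), so it supplies an entry on super-diagonal $p(h) - p(t_k h)$; the constraints $\phi(x_k) \in \Gamma_c(UT_d(\Z))$ demand $p(h) - p(t_k h) \geq c$ for every such pair and every central $x_k$, and requiring equality for each $k$ reduces again to a positional conflict by the same pigeonhole argument. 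The example preceding the theorem (the free nilpotent group of rank $2$ and class $3$, whose central generators $y_4$ and $y_5$ act identically on every basis monomial not involving $t_4$ or $t_5$) indicates that, in the generic situation, the simple row-of-$t_k$ analysis already suffices to pin down a unique required position and produce the contradiction.
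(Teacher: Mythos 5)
Your approach is the same as the paper's: exploit the rigid action of the central Mal'cev generators $x_k \in \Gamma_c(G)$ on the coordinate functions, then run a pigeonhole argument on basis positions. You also correctly sense that both you and the paper implicitly assume that $M$ has $\Q$-basis $\{1, t_1, \dots, t_m\}$, which need not hold. In fact the paper's own worked example (free nilpotent of rank $2$, class $3$) contains a computational error at exactly this point: with the stated relations the correct normal form of $\bar{y}^{\bar{a}}y_1^{-k}$ has $y_5$-coordinate $a_5 + k\binom{a_2}{2}$, so $t_5^{y_1^{-k}}$ is quadratic in $a_2$ and $M$ strictly contains $\mathrm{span}\{1, t_1, \dots, t_5\}$.

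Where your proposal deviates is in diagnosing and repairing that gap, and here there is a genuine problem. First, the danger you flag cannot actually occur: since $x_k$ is central, the coordinate $a_k$ enters every multiplication polynomial $q_i$ only linearly, with coefficient $\delta_{ik}$; therefore no element of $M$ can contain a product $t_k h$ with $h$ nonconstant. The real danger is different: the module $M$ may have several basis elements of the form $t_k^{g^{-1}} = t_k + (\mathrm{polynomial\ in\ non\text{-}central\ coordinates})$, each of which contributes a $-1$ entry of $\phi(x_k) - I$ in the column of $1$. With such a basis the assertion that $\nu_K(\phi(x_k)) = c$ ``forces'' $p(1) - p(t_k) = c$ is an overstatement — one gets only $p(1) - p(t_k) \ge c$, while the witness entry at super-diagonal exactly $c$ may come from a different row. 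Your sentence ``requiring equality for each $k$ reduces again to a positional conflict by the same pigeonhole argument'' is not a proof, and it does not address this actual configuration.

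The conclusion can still be reached, but it needs the linearity observation to be made explicit: every basis element of $M$ (chosen among the translates $t_i^{g^{-1}}$, as Nickel's construction does) involves at most one central coordinate, and with coefficient exactly $1$. Hence for distinct central indices $j \ne j'$ the sets $\{b \in \mathcal{B} : b$ involves $a_j\}$ and $\{b \in \mathcal{B} : b$ involves $a_{j'}\}$ are disjoint. For each central $j$ the maximal position occurring in the corresponding set must be exactly $p(1) - c$, so two disjoint sets would have to share their maximal element — a contradiction. This is the pigeonhole you intend, but it is run on the witness basis elements rather than on the $t_k$ themselves, and it relies on a structural fact about $M$ that your write-up (and the paper) leaves implicit.
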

Before we give a proof, let's take a look at the same example as in Jennings'. 
So let $G$ be the free nilpotent group of rank $2$ and class $3$, with a Mal'cev basis $(y_1,y_2,y_3,y_4,y_5)$. 

We like to show that this group sits as a distorted subgroup of a unitriangular group under Nickel's embedding in all possible orderings on the basis elements for the $G$-submodule generated by $\{t_1, t_2, t_3, t_4, t_5 \} $. 

We first need to look at the actions of the each $y_i^k$ over each $t_j$ for $ 1 \leq i, j \leq 5$

\begin{align*} y_1^{a_1}y_2^{a_2}y_3^{a_3}y_4^{a_4}y_5^{a_5} y_1^{-k} & = y_1^{a_1-k}y_2^{a_2}y_3^{a_3+ka_2}y_4^{a_4+ka_3}y_5^{a_5} \\
 y_1^{a_1}y_2^{a_2}y_3^{a_3}y_4^{a_4}y_5^{a_5} y_2^{-k} & = y_1^{a_1}y_2^{a_2-k}y_3^{a_3}y_4^{a_4}y_5^{a_5+ka_3} \\
 y_1^{a_1}y_2^{a_2}y_3^{a_3}y_4^{a_4}y_5^{a_5} y_3^{-k} & = y_1^{a_1}y_2^{a_2}y_3^{a_3-k}y_4^{a_4}y_5^{a_5} \\
 y_1^{a_1}y_2^{a_2}y_3^{a_3}y_4^{a_4}y_5^{a_5} y_4^{-k} & = y_1^{a_1}y_2^{a_2}y_3^{a_3}y_4^{a_4-k}y_5^{a_5} \\
 y_1^{a_1}y_2^{a_2}y_3^{a_3}y_4^{a_4}y_5^{a_5} y_5^{-k} & = y_1^{a_1}y_2^{a_2}y_3^{a_3}y_4^{a_4}y_5^{a_5-k} \end{align*}
 
 Since all exponents are linear combination of $a_i$'s, we must have $ \{t_1, t_2, t_3,t_4,t_5, 1 \} $ as the $\Q$-basis for the $G$-submodule. Note that in order to obtain unitriangular presentation, we have to make sure $t_3$ should be on the left side of $t_4$ and $t_4$ should be on the right side of $t_5$ and identity $1$ should be placed to the far left end in the order of the basis. So we see that such an ordering is possible and there are more than one possibility. However in each ordering the weights of images of $y_4$ and $y_5$ can not be preserved at the same time. Because the only way to keep the images of the weights of these elements unchanged, both $t_4$ and $t_5$ should be placed at the same location which is only 2 elements away from the identity and this is not possible. Hence, under Nickel's embedding image of $G$ is always distorted.

\textit{Proof of Theorem~\ref{distorted1:thm}.} Let $G$ be a $\tau$-group with  $rank(\Gamma_c)=m$, where $m\geq 2$ and a Hirsch length $n$. Assume $\Gamma_c= \langle x_{n-m+1},x_{n-m}, \dots, x_n \rangle$. Then it is easy to see that , for $1\leq i \leq n$ and $ n-m+1\leq j \leq n$
$$t_i^{x_j ^k} = \begin{cases} x_j-k &\mbox{for}\:i=j \\  x_i &\mbox{for}\: i \neq j  \end{cases}$$
which means that the only way to accomplish to keep the weights of the images of the elements $\{ x_{n-m+1},x_{n-m}, \dots, x_n\} $ preserved is placing all $t_j$'s for $n-m+1\leq j \leq n$ at the same location; that is, $c-1$ elements away from the identity, which is clearly not possible. Therefore, under Nickel's embedding image of $G$ is always distorted. 

\section{Computing the distortion of subgroups}\label{subgroup}
Let $G$ be a finitely generated nilpotent group of class $c$ generated by a finite set $X$ and $H$ a subgroup of $G$. As before $\tau_i = \tau_i(G)= Is(\Gamma_i(G)),$ so $\tau_i(G) = T(G)$ for $i \geq c+1$.

Recall that for  $g \in G\smallsetminus T(G)$, the weight $\nu_{G}(g)$ is defined as the maximal $k$ such that $g \in \tau_k \smallsetminus \tau_{k+1}$.  Since 
$
G = \tau_1(G) \geq \tau_2(G)  \geq \ldots \geq \tau_{c+1}(G) = T(G)
$
the weight $\nu_G(g)$ is uniquely defined for every $g \in G\smallsetminus T(G)$. 

 For an element $g$ from a subgroup $H \leq G$ one can define the relative weight with respect to $H$ as
 $$
 r\nu_{G,H}(g) = \frac{\nu_G(g)}{\nu_H(g)}.
 $$
If the group $G$ and the subgroup $H$ are understood from the context we write $r\nu(g)$ instead of  $r\nu_{G,H}$.

As we have seen in Theorem~\ref{utnZR}, for a subgroup $H \leq G$ one has
 $$ \Delta_H^G (n) \sim n^{d}$$ where $$ d=\max_{h\in H\setminus\ T(H)} r\nu(h)$$
Clearly, this degree $d$ is uniquely defined by $H$ and we denote it by $d_H$.

Denote by $\CN_{r,c}$ the class of all nilpotent of class $c$ finite presentations $P = \langle X;R\rangle_c$ via generators $X$ and relators $R$ with $|X| \leq r$.

In this section we prove the following result.

\begin{thm}
Let $G$ be a finitely generated nilpotent group. Then there is a polynomial time algorithm that for a subgroup $H \leq G$, given by a finite generating set, computes the distortion degree $d_H$ of $H$ and finds an element $h \in H$ such that $r\nu(h) = d_H$. Furthermore, this algorithm is polynomial time uniformly in the nilpotency class $c$ and the size of the generating set of $G$, when the finite presentation of $G$ is a part of the input.
\end{thm}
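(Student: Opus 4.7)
The plan is to exploit the fact that both $\nu_G(h)$ and $\nu_H(h)$ take values in $\{1,\ldots,c\}$, where $c$ is the nilpotency class of $G$, so the possible values of $r\nu(h)$ lie in the finite set $\{i/j : 1 \leq i, j \leq c\}$. Consequently
$$ d_H \;=\; \max\bigl\{\, i/j \,:\, (\tau_i(G) \cap H) \not\subseteq \tau_{j+1}(H) \,\bigr\}, $$
since the displayed non-containment is precisely the statement that some $h \in H$ satisfies $\nu_G(h) \geq i$ and $\nu_H(h) \leq j$, whence $r\nu(h) \geq i/j$; conversely any $h$ realising a ratio $i_0/j_0$ witnesses the pair $(i_0, j_0)$. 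The algorithm loops over the $O(c^2)$ pairs $(i, j)$, tests the non-containment, and returns the best ratio together with a witness element.

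First I would compute a Mal'cev basis of $G$ adapted to the isolated lower central series $G = \tau_1(G) \geq \tau_2(G) \geq \cdots \geq \tau_{c+1}(G) = T(G)$, and then compute the analogous data for $H$ from its finite generating set. Both are standard polynomial-time tasks: the commutator subgroups $\Gamma_k(H)$ are built iteratively as $[\Gamma_{k-1}(H), H]$, the isolators $\tau_k(H)$ are extracted by Smith-normal-form computations on the exponent vectors in Mal'cev coordinates, and the Deep Thought algorithm of Leedham-Green--Soicher provides explicit multiplication polynomials so that collection into normal form, subgroup membership, and subgroup intersection all run in polynomial time, uniformly in a consistent polycyclic presentation of $G$.

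With these bases in hand, for each $j = 1, \ldots, c$ I would compute generating sets for $\tau_j(H)$ and for each $\tau_i(G) \cap H$, and then determine
$$ i_j \;=\; \max\bigl\{\, i \,:\, (\tau_i(G) \cap H) \not\subseteq \tau_{j+1}(H) \,\bigr\} $$
by scanning $i$ downward from $c$. The test reduces to linear algebra in the free abelian quotient $\tau_j(H)/\tau_{j+1}(H)$: project the generators of $\tau_i(G) \cap H$ modulo $\tau_{j+1}(H)$ and check whether at least one projection is nonzero. Setting $d_H = \max_j i_j/j$ and letting $(i^*, j^*)$ attain the maximum, a witness $h$ with $r\nu(h) = d_H$ is produced by the same linear-algebra step, namely any generator of $\tau_{i^*}(G) \cap H$ whose image in $\tau_{j^*}(H)/\tau_{j^*+1}(H)$ is nonzero.

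The main obstacle is verifying polynomial time \emph{uniformly} in the class $c$ and the size of the presentation of $G$. The outer loop over pairs contributes only polynomial overhead, so everything rests on making sure the subroutines --- commutator subgroups, isolators, intersections, normal forms, and the projections onto $\tau_j(H)/\tau_{j+1}(H)$ --- run in time polynomial in the combined size of the polycyclic presentation of $G$, its Hirsch length, the class $c$, and the generators of $H$. This is guaranteed by the standard polynomial-time toolkit for polycyclic presentations, preceded if necessary by a polynomial-time preprocessing step that converts a given finite presentation of nilpotency class $c$ into a consistent polycyclic presentation with a compatible Mal'cev basis. Correctness of the output is then immediate from the displayed formula for $d_H$ and Theorem~\ref{utnZR}.
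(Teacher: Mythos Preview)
Your proposal is correct and follows essentially the same approach as the paper: compute the $\tau_i(G)$, the $\tau_j(H)$, the intersections $H_i=\tau_i(G)\cap H$, and then test containments $H_i\subseteq\tau_{j+1}(H)$ using the standard polynomial-time toolkit (membership, presentation, intersection, isolator) for nilpotent groups. The only cosmetic difference is that you loop directly over all $O(c^2)$ pairs $(i,j)$, whereas the paper organizes the search as a staircase over the distinct strata $T(H)\leq H_{m_1}\leq H_{m_2}\leq\cdots$ and for each stratum finds the maximal $t$ with $H_{m_k}\leq\tau_t(H)$; both yield the same $d_H$, the same witness, and the same polynomial bound.
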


\begin{proof} 
 We first describe the algorithm and then show that it has polynomial running time.
 
 Let $G$ be a finitely generated nilpotent  of class $c$  group given by a finite nilpotent of class $c$  presentation $P = \langle X;R\rangle_c$ with generators $X$  and relators $R$. In what follows we always assume that elelements of $G$ are given by words in $X \cup X^{-1}$ and  subgroups of $G$ are  given by some  finite generating sets. To $compute$ a subgroup always means to find some finite generating set of the subgroup.  
 
 In our constructions below we use polynomial time algorithms from \cite{MMO} that solve the following algorithmic problems:
 
\begin{itemize}
\item Subgroup Membership Problem: given a subgroup and an element of $G$ check if the element belongs to the subgroup.

\item Presentation Problem: given a subgroup of $G$ find a finite presentation for the subgroup, 
\item Intersection problem: given two subgroups of $G$ find their intersection;
\item Isolator problem: given a subgroup of $G$ find its isolator.

\end{itemize}
 
 Step 1. For each $ i = 1, \ldots,  c+1$ one can compute some finite generating sets of the subgroups $\tau_i(G)$. Indeed, it is known that the subgroup $\Gamma_i(G)$ is generated by all basic commutators of weight at least $i$ on $X$. The number of such commutators is bounded from above by a polynomial in $r = |X|$ and $c$. Since the Isolator problem in $G$ is decidable in polynomial time  we can compute the subgroup $\tau_i(G)$ in polynomial time uniformly in $r$ and $c$ (when the presentation of the group $G$ is a part of the input). 
 
 Suppose now that a subgroup $H$ of $G$ is given.
 
 Step 2.  Since the Presentation problem is decidable in Ptime, for the subgroup $H$ one can find in Ptime one of its finite presentations, and then, as was explained in Step 1, the subgroups 
 $H =  \tau_1(H) \geq \tau_2(H) \ldots \geq \tau_{c+1}(H) = T(H)$.

 Step 3.  Since the Intersection problem in $G$ is in Ptime  one can find in polynomial time a finite generating set, say $X_i$,  for  each subgroup $H_i = H\cap \tau_i(G)$, $i = c+1, c, \ldots, 1$. 
 
 Step 4. Using the Subgroup Membership problem one can find the largest  $m$ such that $H_m \not \leq T(G)$ (here $T(G) = \tau_{c+1}(G)$ is the torsion subgroup of  $G$). Denote this $m$ by $m_1$. It follows that for any $h \in H_{m_1} \smallsetminus T(G)$ one has $\nu_G(h) = m$.
 
 Step 5. Using the Subgroup Membership problem one can find the largest $t$ such that $H_{m_1} \leq \tau_t(H)$. Denote such $t$ by $t_1$. Observe, since  for any $i$ $H_{m_1}  \leq \tau_i(H)$ if and only if  for the  generating set $X_{m_1}$ of $H_{m_1}$ one has $X_{m_1} \leq \tau_t(H)$, one can find a particular element say $y_1 \in X_{m_1}$ such that $y_1 \in \tau_{t_1}(H) \smallsetminus \tau_{t_1+1}(H)$, so $\nu_H(y_1) = t_1$.  It follows that 
 $$
 r\nu(y_1) = \max \{r\nu(h) \mid h \in H_{m_1} \smallsetminus T(H)\}  = \frac{m_1}{t_1}.
$$

Step 6 (Loop). Similarly to  Step 4  one can find   the largest $m$ such that $H_m \not \leq H_{m_1}$. Denote this $m$ by $m_2$. It follows that for any $h \in H_{m_2} \smallsetminus H_{m_1}$ one has $\nu_G(h) = m_2$. 
Now similarly to Step 5 one can find the smallest $t$ such that $H_{m_2} \leq \tau_t(H)$. Denote such $t$ by $t_2$. Also, as in Step 5, one can find a particular element say $y_2 \in X_{m_2}$ such that  $\nu_H(y_2) = t_2$.  It follows that 
 $$
 r\nu(y_2) = \max \{r\nu(h) \mid h \in H_{m_2} \smallsetminus H_{m_1}\}  = \frac{m_2}{t_2}.
$$

Repeating this argument we construct a number $m_k$ and a series of subgroups 
$$
H = H_{m_k} \geq \ldots \geq H_{m_1} \geq T(H),
$$
and a series of elements $y_i \in H_{m_i}$ 
such that for any $i = 1, \ldots, k-1$ one has 

$$
 r\nu(y_{i+1}) = \max \{r\nu(h) \mid h \in H_{m_{i+1}} \smallsetminus H_{m_i}\}  = \frac{m_{i+1}}{t_{i+1}}
$$

Taking $j$ such that 
$$
\frac{m_j}{t_j} = \max \{\frac{m_i}{t_i} \mid i = 1, \ldots, k-1\}
$$
gives one the distortion degree $d_H$ of $H$ and also an element $y_j$ such that $r\nu(y_j) = d_h$. This proves the first statement of the theorem. The "furthermore" part follows from the construction since all the algorithms we use here are in Ptime uniformly in $|X|$ and $c$ when we consider the presentation $P = \langle X;R\rangle_c$ as part of the input to the algorithm (see \cite{MMNV,MMO}).

\end{proof}

\section{Open Questions}\label{OpenProblems}

We have seen that both Jennings' and Nickel's Embedding behave the same. Remember that when a proper Mal'cev basis is selected the image of a unitriangular group can be seen as an undistorted subgroup of a bigger unitriangular group. Moreover, when Heisenberg groups and $\tau$-groups with rank greater or equal to 2 are considered, the images under both Jennings's and Nickel's remain distorted. All these results lead to following question. 

\begin{itemize}
\item Given a $\tau$-group $G$, can we find an embedding that gives an undistorted image of $G$ in a unitraingular group. 

\end{itemize}

\end{document}